\documentclass[11pt]{article}
\usepackage[letterpaper]{geometry}
\usepackage[parfill]{parskip}
\usepackage{amsmath,amsthm,amssymb,bbm,bm}
\usepackage{mathtools}
\usepackage{cases}
\usepackage{graphicx}
\usepackage{tabularx}
\usepackage{microtype}
\usepackage{enumitem}

\usepackage{authblk}

\usepackage{url}
\usepackage[colorlinks,citecolor=blue,urlcolor=blue,linkcolor=blue,linktocpage=true]{hyperref}
\pdfstringdefDisableCommands{\def\Cref#1{#1}}

\usepackage{cleveref}
\crefformat{equation}{(#2#1#3)}
\crefrangeformat{equation}{(#3#1#4) to~(#5#2#6)}
\crefname{equation}{}{}
\Crefname{equation}{}{}

\usepackage[authoryear]{natbib}
\usepackage{multirow}

\newtheoremstyle{mythmstyle}
  {8 pt} 
  {3 pt} 
  {} 
  {} 
  {\bfseries} 
  {.} 
  {.5em} 
  {} 

\theoremstyle{plain}

\makeatletter
\def\thm@space@setup{%
  \thm@preskip=6pt plus 1pt minus 1pt
  \thm@postskip=\thm@preskip 
}
\makeatother

\newtheorem{theorem}{Theorem}[section]
\newtheorem{lemma}[theorem]{Lemma}
\newtheorem{corollary}[theorem]{Corollary}
\newtheorem{proposition}[theorem]{Proposition}
\newtheorem{remark}{Remark}

\newtheorem*{example*}{Example}
\newtheorem{definition}{Definition}
\newtheorem*{definition*}{Definition}
\newtheorem*{remark*}{Remark}
\crefname{definition}{\textbf{definition}}{definitions}
\Crefname{definition}{Definition}{Definitions}
\crefname{assumption}{\textbf{assumption}}{assumptions}
\Crefname{assumption}{Assumption}{Assumptions}

\begin{document}
\allowdisplaybreaks
\title{Convergence and Concentration of Empirical Measures under Wasserstein Distance in Unbounded Functional Spaces}

 \author[1]{Jing Lei}
\affil[1]{Carnegie Mellon University}

\maketitle

\begin{abstract}
We provide upper bounds of the expected Wasserstein distance between a probability measure and its empirical version, generalizing recent results for finite dimensional Euclidean spaces and bounded functional spaces. Such a generalization can cover Euclidean spaces with large dimensionality, with the optimal dependence on the dimensionality. Our method also covers the important case of Gaussian processes in separable Hilbert spaces, with rate-optimal upper bounds for functional data distributions whose coordinates decay geometrically or polynomially. Moreover, our bounds of the expected value can be combined with mean-concentration results to yield improved exponential tail probability bounds for the Wasserstein error of empirical measures under Bernstein-type or log Sobolev-type conditions.
%
\end{abstract}

\section{Introduction}
Let $\mu$, $\nu$ be two probability measures on a Banach space $\mathcal X$, the \emph{Wasserstein distance} of order $p$ ($p\in [1,\infty)$) between $\mu$, $\nu$ is
\begin{align*}
W_p(\mu,\nu) = \inf_{\xi\in \mathcal C(\mu,\nu)}\left[\mathbb E_{(X,Y)\sim \xi}\|X-Y\|^p\right]^{1/p}\,,
\end{align*}
where $\mathcal C(\mu,\nu)$ denotes the collection of all distributions on $\mathcal X^2$ with marginal distributions being $\mu$ and $\nu$.

Wasserstein distances have a clear intuitive meaning:  What is the minimum cost if we want to obtain $\nu$ by transporting the probability mass in $\mu$? Here the cost is defined as the product of probability mass moved and the distance moved raised to the $p$th power. Therefore, the Wasserstein distance is also called ``optimal transport distance'' or ``earth mover's distance''. The problem of optimal transport can be traced back to \cite{Monge1781} and \cite{Kantorovich42}.  

Since the introduction in \cite{Vaserstein69}, the Wasserstein distances have become an important tool in computer vision and statistical machine learning.   In addition to the connection with optimal transport, Wasserstein distances have some desirable features.  For example, they can be meaningfully defined for any two distributions without any requirement on the existence of density or absolute continuity.  Other measurements, such as the Kullback-Leibler divergence, have more stringent requirements on $\mu$ and $\nu$. See \cite{SommerfeldM18} for a more thorough historical review of Wasserstein distances and their applications in statistics, and \cite{Villani08} for further details about Wasserstein distances and optimal transport in a broader context.

In statistics and machine learning, we often do not have access to $\mu$ but only its empirical version $\hat\mu$, which puts $1/n$ mass at each one of $n$ independent samples from $\mu$.  A fundamental problem is to understand $W_p(\hat\mu,\mu)$.  When $p=1$ the Kantorovich-Rubinstein duality \citep{KantorovichR58} implies that $W_1(\hat\mu,\mu)$ is equivalent to the supremum of the empirical process indexed by Lipschitz-$1$ functions. As a consequence, \cite{Dudley69} provides sharp lower and upper bounds of $\mathbb E W_1(\hat\mu,\mu)$ for $\mu$ supported on a bounded finite dimensional set. \cite{Talagrand94transportation} studies the case when $\mu$ is the uniform distribution on a $d$-dimensional unit cube.  For general distributions, \cite{Boissard14,DereichSS13,FournierG15} establish sharp upper bounds of $\mathbb E W_p(\hat\mu,\mu)$ in finite dimensional Euclidean spaces.  Recently, \cite{WeedB17,SinghP18} provide similar results for general distributions on a bounded metric space $\mathcal X$.

In this paper we study upper bounds of $\mathbb E W_p(\hat\mu,\mu)$ for distributions in unbounded Banach spaces, including higher dimensional Euclidean spaces ($\mathcal X=\mathbb R^d$ where $d$ may grow to $\infty$ as $n$ increases) and separable Hilbert spaces (square-integrable functions on $[0,1]$) as important examples.  Our argument combines the strengths from several recent works and provides more general and improved results in settings of practical interest. In addition to bounding $\mathbb E W_p(\hat\mu,\mu)$, we also establish a concentration inequality for $W_p(\hat\mu,\mu)$ around its expected value, which is dimension-free and offers improvements over existing results.


\subsection{Motivating examples.}\label{sec:motivating}
The convergence of empirical measure in Wasserstein distances in high-dimensional and functional spaces can be useful in different machine learning and statistical inference contexts.  Here we describe two examples in detail.  

\textit{Example 1. Unsupervised deep learning.} The first example is the Wasserstein GAN \citep{WGAN}, where GAN stands for ``generative adversarial network'' \citep{GAN}, a flexible framework for unsupervised learning.  The goal of GAN is to approximate an unknown distribution $\mu_r$ on a sample space $\mathcal X$, given an i.i.d sample from $\mu_r$. A typical example is the hypothetical distribution of all images.  To this end, GAN starts from a simple underlying distribution $\mu_z$ on a possibly different space $\mathcal Z$, such as the standard multivariate Gaussian in $\mathcal Z=\mathbb R^{100}$, and then finds a mapping $g:\mathcal Z\mapsto \mathcal X$, such that the distribution induced by $g(Z)$ on $\mathcal X$, denoted by $\mu_g$, is close to $\mu_r$.

In practice, neither $\mu_r$ nor $\mu_g$ is available. But the empirical version $\hat\mu_r$ is given as the data, and $\hat\mu_g$ can be generated by passing i.i.d samples from $\mu_z$ through the black-box function $g$. So one has to work with the empirical distributions instead of the original underlying distributions. In particular, Wasserstein GAN tries to find a mapping $g$ to minimize
$
W_1\left(\hat\mu_g,\hat\mu_r\right)
$
using deep neural networks.  We refer to the original papers \citep{GAN,WGAN} for the detailed realization and implementation of such a minimization problem.
Since its invention, Wasserstein GANs have found great success in image data due to the nice behavior of Wasserstein distances between mutually singular distributions. 

However, in machine learning, we are ultimately interested in the ability to approximate/predict future observations generated from the same underlying distribution. This leads to the problem of \emph{generalization}.  In the context of Wasserstein GAN, this problem further reduces to controlling
$W_1(\mu_g,\mu_r)$.   Given that the WGAN algorithm finds a $g$ such that $
W_1\left(\hat\mu_g,\hat\mu_r\right)
$ is small, one would know $W_1(\mu_g,\mu_r)$ is small if both $W_1(\mu_g,\hat\mu_g)$ and $W_1(\mu_r,\hat\mu_r)$ are small.  
Since WGAN is typically applied to images and videos, it is desirable to cover cases where $\mathcal X$ is high-dimensional or infinite-functional.

\textit{Example 2. Latent space representation of exchangeable random graphs.}
Exchangeable random graphs \citep{Aldous81,Hoover82,Kallenberg89} have been an important probabilistic tool in network research \citep{BickelC09,Lovasz12}. The existing parameterization of such exchangeable random graphs, called the \emph{graphon} \citep{Lovasz12}, has some undesirable identifiability issues, making the corresponding statistical inferences challenging \cite{Wolfe13,Airoldi13,GaoLZ15}.
Recently, \cite{Lei18network} established a new parameterization for exchangeable random graphs to facilitate the statistical inferences for network data. There are three findings in \cite{Lei18network}.
\begin{enumerate}
  \item [(i)] Under certain mild regularity conditions, an exchangeable random graph $\mathcal A$ can be parameterized by a distribution $\mu_{\mathcal A}$ on a separable Hilbert space $\mathcal H$.
  \item [(ii)] For two exchangeable random graphs $\mathcal A_1$ and $\mathcal A_2$ with corresponding Hilbert space distribution parameters $\mu_1$ and $\mu_2$, the difference in graph limits between $\mathcal A_1$ and $\mathcal A_2$ (also known as the cut-distance) can be upper bounded by $W_1(\mu_1,\mu_2)$.
  \item [(iii)] Suppose we observe a partial realization of $\mathcal A$ with $n$ nodes, then each node can be embedded in the Hilbert space $\mathcal H$ as an i.i.d sample from $\mu_{\mathcal A}$, and it is possible to find a good approximation of $\hat\mu_{\mathcal A}$ in Wasserstein-1 distance using the observed random graph with $n$ nodes. Here $\hat\mu_{\mathcal A}$ is the empirical version of $\mu_{\mathcal A}$ with $1/n$ probability mass at each of the embedded nodes.
\end{enumerate}
Combining the above three claims, one would be able to recover $\mu_{\mathcal A}$ in Wasserstein-1 distance from a partial realization of the exchangeable random graph $\mathcal A$ if one can show that $W_1(\hat\mu_{\mathcal A},\mu_{\mathcal A})$ is small.  Again, given the context we wish to consider this problem in an infinite-dimensional separable Hilbert space without boundedness assumptions.

\subsection{Overview of results.}
In \Cref{sec:generic}, we develop a general argument for bounding $\mathbb E W_p(\hat\mu,\mu)$. This approach combines the hierarchical transport program under moment conditions developed in \cite{DereichSS13,FournierG15}, with the abstract partition approach in \cite{WeedB17}. Comparing to \cite{DereichSS13,FournierG15}, both of which focus on fixed dimensionality, our argument uses covering numbers of a baseline set that is compatible with the corresponding moment condition, leading to improved rates of convergence in high dimensional settings, as well as applicability to general Banach spaces.  Comparing to \cite{WeedB17,SinghP18}, both of whom focus on bounded spaces, our result extends to the unbounded case using a moment condition.  The result of \cite{WeedB17} is for fixed dimensionality, while the infinite dimensional example in \cite{SinghP18} is a special case of the moment condition used in this paper.  An example that cannot be covered by these existing works is infinite dimensional Gaussian processes.

In \Cref{sec:euclidean} we demonstrate the advantage of abstract partition in Euclidean spaces with large dimensionality.  If $\mathcal X=\mathbb R^d$ and $\mathbb E_{X\sim \mu}\|X\|^q\le M_q^q$ for some $p<q\ll d\ll \log n$.  Then (see \Cref{thm:euclidean}) 
  $$
\mathbb E W_p(\hat\mu,\mu)\le c_{p,q} M_q n^{-1/d}\,,
  $$
where the constant $c_{p,q}$ depends only on $(p,q)$ but not $d$.  In contrast, the results in \cite{DereichSS13,FournierG15,WeedB17,SinghP18} have a constant as an unbounded function of $d$.

In \Cref{sec:function} we further apply the general recipe to distributions on separable Hilbert spaces under ellipsoid-type moment conditions. Let $\mu$ be a distribution on $\mathcal X=\{x\in \mathbb R^\infty: \sum_{m\ge1}x_m^2<\infty\}$. Let $\tau=(\tau_m:m\ge 1)$ be a sequence of nonnegative numbers and $\rho_\tau(x)=[\sum_{m\ge 1}(x_m/\tau_m)^2]^{1/2}$. The moment condition we consider here is $\mathbb E\rho^q_\tau(X)=M_q^q<\infty$. 
We consider two popular cases of $\tau$ in functional data analysis. In the first case $\tau_m=m^{-b}$ for some $b>1/2$, which corresponds to a polynomial decay of the coordinates.  In this case we have (see \Cref{thm:poly}), for some constant $c_{p,q,b}$ depending only on $(p,q,b)$,
$$
\mathbb E W_p(\hat\mu,\mu)\le c_{p,q,b} M_q (\log n)^{-b}\,.
$$
In the second case $\tau_m=\gamma^{-m}$ for some $\gamma>1$, which corresponds to an exponential decay of the coordinates. In this case we have (\Cref{thm:exp}), for some constant $c_{p,q,\gamma}$ depending only on $(p,q,\gamma)$,
$$\mathbb E W_p(\hat\mu,\mu)\le c_{p,q,\gamma} M_q e^{-\sqrt{2\log\gamma\log n}}\,.$$
These upper bounds are rate-optimal because in each case we can construct a corresponding $\mu$ such that the upper bound is matched by a lower bound with the same rate.  

In \Cref{sec:concentration}, we establish concentration inequalities for $W_p(\hat\mu,\mu)$. Assuming a sub-exponential tail behavior of $\|X\|$, the rate of concentration of $W_p(\hat\mu,\mu)$ around its expected value can be as small as $n^{-1/2}$ when $p=1$, which can vanish faster than the optimal upper bound of $\mathbb E W_p(\hat\mu,\mu)$ in high-dimension settings.  The new concentration inequality is dimension-free. In the case of $p=1$, it provides a better concentration rate compared to \cite{Boissard11} and \cite{FournierG15} under the sub-exponential tail condition.  We also extend an argument of \cite{BobkovL14,GozlanC10} from the case of $\mathcal X=\mathbb R^1$ to more general cases.  Our  concentration results are based on a variation of McDiarmid's inequality and the Lipschitz property of Wasserstein distances.

\paragraph{Notation.}
$\mathbb B_x^r=\{y\in\mathcal X:\|x-y\|\le r\}$ is the ball of radius $r$ centered at $x$.\\
$\mathbb N$ is the set of all positive integers.\\
For a random variable $Z$, $\|Z\|_p=(\mathbb E|Z|^p)^{1/p} $ denotes the $L_p$ norm of $Z$.\\
$\|x\|$ denotes the norm of $x$ in the Banach space $\mathcal X$.\\
$L_2$ denotes the separable Hilbert space $\{x\in\mathbb R^\infty:\sum_{m\ge 1}x_m^2<\infty\}$.\\
For $B\subseteq\mathcal X$ and $a\in\mathbb R$: $aB=\{ax:x\in B\}$.\\
For a set $B$ and a measure $\mu$, $\mu|_B$ is the confinement of $\mu$ on $B$: $(\mu|_B)(A)=\mu(B\cap A)$.

\section{A general upper bound}\label{sec:generic}
Given a distribution $\mu$ on an unbounded space $\mathcal X$ and its empirical version $\hat\mu$ of sample size $n$. The Wasserstein distance $W_p(\hat\mu,\mu)$ is determined by essentially two aspects of the problem: the complexity of the support of $\mu$ and the tail behavior of $\mu$. We describe these two aspects in detail and introduce the corresponding techniques.

\subsection{Covering and partition in $\mathcal X$.}
The first quantity that affects $W_p(\hat\mu,\mu)$ comes from the dimensionality (or complexity) of the space $\mathcal X$.  Suppose for now that $\mu$ is supported on a bounded subset $B_0\subset\mathcal X$.  The complexity of $B_0$ can be described by its $\epsilon$-covering number.

\begin{definition}[$\epsilon$-covering]
  For $B\subseteq\mathcal X$, and $\epsilon \in (0,1]$, the $\epsilon$-covering number, denoted as $N_\epsilon(B)$ is
  \begin{align*}
    N_\epsilon(B)=\inf\left\{N\in \mathbb N:~\exists~x_1,...,x_N\in B,\text{ s.t. }B\subseteq \cup_{i=1}^N \mathbb B_{x_i}^\epsilon\right\}\,.
  \end{align*}
\end{definition}

$\log N_\epsilon(B)$ is called the $\epsilon$-entropy number of $B$ and the speed at which it increases as $\epsilon$ decreases plays an important role in the empirical process theory.  

Given an $\epsilon\in(0,1]$, let $x_1,...,x_{N_\epsilon(B_0)}$ be the centers of an $\epsilon$-covering of $B_0$. Let $F_1=\mathbb B_{x_1}^\epsilon\cap B_0$, and $F_i=(\mathbb B_{x_i}^\epsilon \cap B_0)\backslash (\cup_{1\le j<i} F_j)$ for $2\le i\le N_\epsilon(B_0)$, then $\mathcal A=\{F_i:1\le i\le N_\epsilon(B_0)\}$ is a partition of $B_0$ satisfying $\max_{F\in\mathcal A} {\rm diam}(F)\le 2\epsilon$ and ${\rm card}(A)\le N_\epsilon(B_0)$.

A basic tool for controlling $W_p(\hat\mu,\mu)$ is to construct a sequence of nested $\epsilon_\ell$-partitions for a sequence of geometrically decaying $(\epsilon_\ell:\ell \in 0\cup \mathbb N)$.   \cite{DereichSS13,FournierG15} use $B_0=[-1,1]^d$ and recursive dyadic partitions of $B_0$. The use of general $\epsilon$-partitions first appeared in \cite{WeedB17}.

In our proof of upper bound, we consider $\epsilon_\ell=3^{-(\ell+1)}$ as in \cite{WeedB17} and define
\begin{equation}\label{eq:bar_N}
\bar N_\ell(B_0)=N_{3^{-(\ell+1)}}(B_0)\,,~~\ell\in 0\cup \mathbb N\,.
\end{equation}
The complexity of $B_0$ will be reflected in the upper bound of $W_p(\hat\mu,\mu)$ through $\bar N_\ell$. 

The following lemma is a basic result in upper bounding $W_p(\mu,\nu)$ in the bounded support case using a hierarchical partition.
\begin{lemma}\label{lem:bounded}
  Let $\mu$ and $\nu$ be two probability measures supported on $B_0$.
  Let $\ell^*\in \{0\}\cup\mathbb N$ and $\{B_0\}=\mathcal A_0,\mathcal A_1,...,\mathcal A_{\ell^*}$ be a sequence of nested partitions of $B_0$ such that $\sup_{F\in\mathcal A_\ell}{\rm diam}(F)\le c3^{-\ell}$ for some constant $c$, then there exists a constant $c_p$ depending only on $p$ such that
  $$
  W_p^p(\mu,\nu)\le c_{p}\left(3^{-p\ell^*}+\sum_{\ell=0}^{\ell^*}
  3^{-p\ell}\sum_{F\in\mathcal A_\ell}|\nu(F)-\mu(F)|\right)\,.
  $$
\end{lemma}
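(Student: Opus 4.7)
The plan is to build an explicit coupling of $\mu$ and $\nu$ from the nested partition and bound its $p$-th moment. Fix a reference point $x_F\in F$ for each cell $F\in \bigcup_\ell \mathcal A_\ell$, and let $\mu_\ell:=\sum_{F\in\mathcal A_\ell}\mu(F)\delta_{x_F}$ and $\nu_\ell$ its analogue for $\nu$. The ``collapse to center'' coupling $(X,x_{F(X)})$ yields $W_p^p(\mu,\mu_{\ell^*})\le (c\,3^{-\ell^*})^p$ and likewise for $\nu$. Combining the triangle inequality $W_p(\mu,\nu)\le W_p(\mu,\mu_{\ell^*})+W_p(\mu_{\ell^*},\nu_{\ell^*})+W_p(\nu_{\ell^*},\nu)$ with $(a+b+c)^p\le 3^{p-1}(a^p+b^p+c^p)$ reduces the problem to proving a core estimate of the form
$W_p^p(\mu_{\ell^*},\nu_{\ell^*})\le c'_p\sum_{\ell=0}^{\ell^*}3^{-p\ell}\sum_{F\in\mathcal A_\ell}|\mu(F)-\nu(F)|,$
which accounts for the second summand in the claim while the ``collapse'' bounds account for the $3^{-p\ell^*}$ summand.

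For the core estimate, I build a coupling $\pi_{\ell^*}$ of $\mu_{\ell^*}$ and $\nu_{\ell^*}$ recursively down the tree of nested partitions. At level $0$ we have $\mu_0=\nu_0=\delta_{x_{B_0}}$, so the coupling is trivial. Given a coupling $\pi_{\ell-1}$ at level $\ell-1$, refine it by splitting each pile of mass sitting at $x_F$ ($F\in\mathcal A_{\ell-1}$) among the child centers $x_{F'}$ ($F'\in\mathcal A_\ell$, $F'\subset F$), according to the conditional ratios $\mu(F')/\mu(F)$ on the source side and $\nu(F')/\nu(F)$ on the target side. Reconciling the two splittings within $F$ requires transport over distance at most $\mathrm{diam}(F)\le c\,3^{-(\ell-1)}$, and the reconciled mass inside $F$ is controlled by the intra-$F$ level-$\ell$ discrepancy $\sum_{F'\subset F,\,F'\in\mathcal A_\ell}|\mu(F')-\nu(F')|$. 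Summing over $F\in\mathcal A_{\ell-1}$ gives a per-level cost $c^p3^{-p(\ell-1)}\sum_{F\in\mathcal A_\ell}|\mu(F)-\nu(F)|$; summing over $\ell=1,\dots,\ell^*$ yields the desired geometric-series bound after absorbing $c^p 3^p$ into $c'_p$.

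The main technical obstacle is the bookkeeping of unmatched mass across levels: careless refinement allows imbalances inherited from coarser levels to be counted again at finer levels. The key invariant to maintain is that at each step, all \emph{new} transport introduced from level $\ell-1$ to level $\ell$ happens strictly inside some parent cell $F\in\mathcal A_{\ell-1}$, so its cost is controlled by $\mathrm{diam}(F)\le c\,3^{-(\ell-1)}$ alone, while the alignment of $\mu$- and $\nu$-masses at level $\ell-1$ handled by $\pi_{\ell-1}$ leaves only the level-$\ell$ child-wise discrepancies to be reconciled. Once this no-double-counting invariant is verified, the rest is straightforward arithmetic, and the final constant $c_p$ depends on $p$ only through the $3^{p-1}$-type inflations from the triangle-inequality steps.
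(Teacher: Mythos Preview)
Your approach is correct and takes a genuinely different route from the paper's. The paper does not discretize: instead it defines intermediate measures $\mu_0=\mu,\mu_1,\dots,\mu_{\ell^*}$ that are absolutely continuous with respect to $\mu$, with $\mu_{\ell+1}|_C=\frac{\nu(C)}{\mu(C)}\mu|_C$ for $C\in\mathcal A_{\ell+1}$, so that $\mu_\ell(F)=\nu(F)$ for every $F\in\mathcal A_\ell$. This equality is the paper's substitute for your invariant: because $\mu_\ell$ and $\mu_{\ell+1}$ agree on each level-$\ell$ cell, the transport between them (via an elementary coupling lemma) moves mass only within level-$\ell$ cells, and the moved mass within $F$ is $\tfrac12\sum_{C\subset F}|\nu(C)-\nu(F)\mu(C)/\mu(F)|$, which the same triangle-inequality trick you would need converts to $\sum_C|\nu(C)-\mu(C)|+|\nu(F)-\mu(F)|$. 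The paper packages the chain of transports as a Markov chain $Z_0,\dots,Z_{\ell^*}$ and bounds $\|Z_0-Z_{\ell^*}\|^p$ via the first time $L$ at which $Z$ moves. What the paper's route buys is that the ``no-double-counting'' invariant is automatic by construction of the $\mu_\ell$'s; what your route buys is a purely combinatorial picture on finitely-supported measures that avoids the density reweighting and is closer to the original Weed--Bach argument.

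One caution about your construction: ``split by conditional ratios on each side, then reconcile within $F$'' should be read as taking the \emph{maximal} coupling of the two child distributions on the diagonal part of $\pi_{\ell-1}$, with off-diagonal mass refined arbitrarily. If you instead split both sides independently, the invariant fails (off-diagonal pairs never re-merge) and the per-level cost does not land on $\sum_{F'\in\mathcal A_\ell}|\mu(F')-\nu(F')|$ without the extra $|\mu(F)-\nu(F)|$ correction term---which is fine for the final bound since it sums over all levels, but is exactly the bookkeeping you flag and should be made explicit.
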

\Cref{lem:bounded} first appears in its general form as Proposition 1 of \cite{WeedB17}, which extends the Euclidean space version with dyadic hypercube partition in \cite{DereichSS13,FournierG15}.  In \Cref{app:proof_gen}, we give an alternative proof of it, adapting the elegant Markov chain argument in \cite{DereichSS13}.

In \Cref{lem:bounded}, the sum over $\ell$ comes from the fact that the best way to transport the mass in $\mu$ to $\hat\mu$ is through its blurred versions on the $\epsilon_\ell$-partition.  Let $\mathcal A_\ell$ be the partition of $B_0$ at level $\ell$, the blurred version, $\mu_\ell$, of $\mu$ is (using the convention $0/0=0$)
\begin{align*}
\mu_\ell=\sum_{F\in\mathcal A_\ell} \mu_\ell|_F\,,~~\text{with}~~
\mu_\ell|_F = \frac{\hat\mu(F)}{\mu(F)}\mu|_F.
\end{align*}
These blurred versions can be used to construct a chain of transports that link between $\mu$ and $\hat\mu$, while each having small transport cost.

\subsection{Telescope decomposition and moment condition.}\label{sec:telescope}
Now we drop the assumption that $\mu$ is supported on a bounded set $B_0$.  The second quantity that affects $W_p(\hat\mu,\mu)$ comes from the tail behavior of $\mu$, which we control using a ``telescope decomposition'' of $\mathcal X$, combined with a moment condition.

Let $\rho:\mathcal X\mapsto \{0\}\cup\mathbb R^+$ be a homogenous functional satisfying
\begin{enumerate}
  \item [(\textbf{R1})] $\rho(a x)=|a| \rho(x)$ for all $a\in \mathbb R$;
  \item [(\textbf{R2})] $B_{0,\rho}\stackrel{\rm def}{=}\{x: \rho(x)\le 1\}\subseteq \mathbb B_0^1$.
\end{enumerate}
Condition (R1) simply requires homogeneity of $\rho$. Condition (R2) ensures that $\rho$ dominates the Banach space norm.

When $\mathcal X=\mathbb R^d$, an example of $\rho$ is $\rho(x)=\|x\|$ and the corresponding $B_{0,\rho}$ is the  unit Euclidean ball. When $\mathcal X=L_2$, an example of $\rho$ is $\rho_\tau(x)=[\sum_{m\ge 1}(x_m/\tau_m)^2]^{1/2}$ for some $\tau\in (\mathbb R^+)^\infty$ satisfying $\tau_m\le 1$ for all $m$.

Sometimes the dimensionality of $\mathbb B_0^1$ is too high to yield any meaningful upper bound. For example, if $\mathcal X=L_2$ then $N_\epsilon(\mathbb B_0^1)=\infty$ for all values of $\epsilon$ smaller than a constant.  However,  $B_{0,\rho}$ can be substantially smaller than $\mathbb B_0^1$ for appropriate choices of $\rho$, so that we can still obtain useful upper bounds.

To deal with unbounded support of $\mu$, we decompose $\mu$ to its confinements on a sequence of ``telescoping'' subsets $(B_{j}:j\in 0\cup\mathbb N)$, where $B_0=B_{0,\rho}$ and
\begin{equation*}
  B_{j}= (2^jB_{0})\backslash B_{j-1}, ~~\forall~j\in\mathbb N\,.
\end{equation*}
The tail behavior of $\mu$ is characterized by $\mu|_{B_j}$ for large values of $j$. Our analysis will consider $\tilde\mu_j$, the induced measure of $\frac{\mu|_{B_j}}{\mu(B_j)}$ after the mapping $x\mapsto 2^{-j}x$, and apply the bounded support results to $\tilde \mu_j$.

The following ``telescoping lemma'' generalizes the corresponding argument in \cite{FournierG15} to the case of general Banach spaces and general moment conditions.
\begin{lemma}\label{lem:telescope} Let $\rho$ be a function that satisfies (R1) and (R2).
Let $\mu$ and $\nu$ be two probability measures on $\mathcal X$ such that $\rho(x)<\infty$ almost surely under $\mu$ and $\nu$. For $j\ge 0$, define $\tilde\mu_j$ ($\tilde\nu_j$) to be the normalized probability measure induced by $\mu|_{B_j}$ ($\nu|_{B_j}$) after the mapping $x\mapsto 2^{-j}x$. Then there exists a constant $c_p$ depending only on $p$ such that \begin{equation}\label{eq:telescope}
  W_p^p(\mu,\nu)\le c_p
  \sum_{j\ge 0}2^{pj}\left[(\mu(B_j)\wedge\nu(B_j)) W_p^p(\tilde\mu_j,\tilde\nu_j)+|\mu(B_j)-\nu(B_j)|\right]\,.
\end{equation}  
\end{lemma}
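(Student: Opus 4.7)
The plan is to construct a valid (though non-optimal) coupling $\xi$ between $\mu$ and $\nu$ as the sum of a \emph{diagonal} part, which moves mass only within individual telescope shells $B_j\times B_j$, and an \emph{off-diagonal} part, which handles the remaining shell-wise mass mismatch. Because $\rho$ is finite almost surely, the shells $\{B_j\}_{j\ge 0}$ partition $\mathcal X$ up to a null set, so $\mu$ and $\nu$ are fully described by the conditional probabilities $\bar\mu_j = \mu|_{B_j}/p_j$ and $\bar\nu_j = \nu|_{B_j}/q_j$, together with the shell masses $p_j=\mu(B_j)$ and $q_j=\nu(B_j)$; write $r_j=p_j\wedge q_j$.

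For the diagonal part, I would let $\xi_j$ be an optimal $W_p$ coupling of $\bar\mu_j$ and $\bar\nu_j$ (existence is standard on Polish spaces) and set $\xi^{(d)} = \sum_j r_j\,\xi_j$. Since (R1) makes the norm positively homogeneous, the pushforward under $x\mapsto 2^{-j}x$ scales $W_p$ by exactly $2^{-j}$, so $W_p^p(\bar\mu_j,\bar\nu_j)=2^{pj}W_p^p(\tilde\mu_j,\tilde\nu_j)$. Thus $\xi^{(d)}$ contributes $\sum_j r_j 2^{pj}W_p^p(\tilde\mu_j,\tilde\nu_j)$ to the transport cost, matching the first term of the target bound.

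For the off-diagonal part, define the excess measures $\mu^{\mathrm{ex}} = \mu - \sum_j r_j\bar\mu_j$ and $\nu^{\mathrm{ex}} = \nu - \sum_j r_j\bar\nu_j$. Both are non-negative with $\mu^{\mathrm{ex}}(B_j)=p_j-r_j$, $\nu^{\mathrm{ex}}(B_j)=q_j-r_j$, and common total mass $S = \tfrac12\sum_j|p_j-q_j|$. When $S>0$, the product coupling $\xi^{\mathrm{ex}} = S^{-1}\mu^{\mathrm{ex}}\otimes\nu^{\mathrm{ex}}$ has the required marginals, so $\xi := \xi^{(d)} + \xi^{\mathrm{ex}}$ is a coupling of $\mu$ and $\nu$. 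The key estimate is
\[
\int\|x-y\|^p\,d\xi^{\mathrm{ex}}
\le 2^{p-1}\!\left(\int\|x\|^p\,d\mu^{\mathrm{ex}} + \int\|y\|^p\,d\nu^{\mathrm{ex}}\right),
\]
which follows from $\|x-y\|^p\le 2^{p-1}(\|x\|^p+\|y\|^p)$ and Fubini (using that both excess measures have total mass $S$). Condition (R2) gives $B_j\subseteq 2^j\mathbb B_0^1$, so $\|x\|\le 2^j$ on $B_j$, hence $\int\|x\|^p\,d\mu^{\mathrm{ex}}\le \sum_j 2^{pj}(p_j-r_j)$, and similarly for $\nu^{\mathrm{ex}}$, producing an off-diagonal cost bounded by $2^{p-1}\sum_j 2^{pj}|p_j-q_j|$.

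The main conceptual point---rather than any real computational difficulty---is the decision to route excess mass through the origin via the crude bound $\|x-y\|^p\le 2^{p-1}(\|x\|^p+\|y\|^p)$ rather than attempting to pair distinct shells directly; this decouples the telescope levels and makes each $|p_j-q_j|$ contribute additively with weight $2^{pj}$. Combining the diagonal and off-diagonal contributions, together with the trivial case $S=0$ (where no $\xi^{\mathrm{ex}}$ is needed), yields the stated inequality with $c_p=2^{p-1}$.
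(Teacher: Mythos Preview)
Your proposal is correct and follows essentially the same construction as the paper: the diagonal piece $\sum_j r_j\xi_j$ and the off-diagonal product coupling $S^{-1}\mu^{\mathrm{ex}}\otimes\nu^{\mathrm{ex}}$ coincide (up to notation) with the paper's $\sum_j(\mu(B_j)\wedge\nu(B_j))\xi_j+\eta^{-1}\alpha\times\beta$, and the cost bound via $\|x-y\|^p\le 2^{p-1}(\|x\|^p+\|y\|^p)$ with $\|x\|\le 2^j$ on $B_j$ is identical. One minor slip: the homogeneity you invoke to get $W_p^p(\bar\mu_j,\bar\nu_j)=2^{pj}W_p^p(\tilde\mu_j,\tilde\nu_j)$ comes from the Banach space norm itself, not from (R1), which concerns $\rho$.
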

The exponentially increasing factor $2^{pj}$ in the above sum needs to be offset by the small terms $\mu(B_j)\wedge\nu(B_j)$. This motivates the following moment condition:
\begin{equation}\label{eq:q_moment}
  \|\rho(X)\|_q = M_q <\infty
\end{equation}
for some constant $q>p$.

If the moment condition \eqref{eq:q_moment} holds with $M_q=1$ for $X\sim\mu$, then the contribution of $W_p^p(\tilde\mu_j,\tilde\nu_j)$ in \eqref{eq:telescope} becomes less important as $j$ gets larger, because
$\mu(B_j)\le 2^{-q(j-1)}$.  

The moment condition $\|\rho(X)\|_q= M_q<\infty$ for $X\sim\mu$ serves another purpose: It implies that $\rho(x)<\infty$ $\mu$-almost surely.  In this paper we are mainly interested in the case $\nu=\hat\mu$, whose support is no larger than that of $\mu$.  As a result, we have $\rho(x)<\infty$ almost surely under $\mu$ and $\nu$, as required by \Cref{lem:telescope}. An important consequence is that $\{B_j:j\ge 0\}$ forms a partition of the supports of $\mu$ and $\nu$.

\subsection{A general upper bound.}
\begin{theorem}\label{thm:general}
  Let $\bar N_\ell$ be defined as in \eqref{eq:bar_N} with $B_0=B_{0,\rho}$ for a function $\rho$ satisfying conditions R1 and R2.
  Assume the moment condition \eqref{eq:q_moment} holds with $q>p\ge 1$ for $X\sim \mu$, then for any $\ell^*\in\mathbb N$, we have
  \begin{equation}\label{eq:thm_gen}
    \mathbb E W_p^p(\hat\mu,\mu)\le c_{p,q}M_q^p\sum_{j\ge 0}2^{pj}\left\{2^{-qj}3^{-p\ell^*}+\sum_{\ell=0}^{\ell^*}3^{-p\ell}\left[2^{-qj}\wedge\left(\bar N_\ell 2^{-qj}n^{-1}\right)^{1/2}\right]\right\}\,.
  \end{equation}
\end{theorem}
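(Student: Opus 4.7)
The plan is to first apply the telescope decomposition (\Cref{lem:telescope}) with $\nu=\hat\mu$, and then use the bounded-support estimate (\Cref{lem:bounded}) inside each ring $B_j$ together with expected partition-discrepancy bounds. By the homogeneity in R1, $W_p(\hat\mu,\mu)^p$ scales as $M_q^p$ when we replace $X$ by $X/M_q$, so we may assume $M_q=1$ throughout and restore $M_q^p$ at the end. Under this normalization, Markov's inequality applied to $\rho(X)$ gives $p_j:=\mu(B_j)\le 2^{q}\cdot 2^{-qj}$ for every $j\ge 0$, which will offset the exponential factor $2^{pj}$ coming from telescoping.

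Inside each ring, $\tilde\mu_j$ and $\tilde{\hat\mu}_j$ are supported in $2^{-j}B_j\subseteq B_0=B_{0,\rho}$, so the $\epsilon_\ell$-coverings of $B_0$ at scale $3^{-(\ell+1)}$ induce nested partitions $\mathcal A_\ell$ of cardinality at most $\bar N_\ell$ and diameter at most $(2/3)\,3^{-\ell}$. \Cref{lem:bounded} then yields, for any fixed $\ell^*$,
\begin{equation*}
W_p^p(\tilde\mu_j,\tilde{\hat\mu}_j)\le c_p\Bigl(3^{-p\ell^*}+\sum_{\ell=0}^{\ell^*}3^{-p\ell}\sum_{F\in\mathcal A_\ell}|\tilde{\hat\mu}_j(F)-\tilde\mu_j(F)|\Bigr).
\end{equation*}
Multiplying by $\mu(B_j)\wedge\hat\mu(B_j)$ kills the denominators in the normalized measures: writing $\mu_{j,F}=\mu(B_j\cap 2^j F)$ and $\hat\mu_{j,F}=\hat\mu(B_j\cap 2^j F)$, a short case analysis (splitting on whether $\hat p_j\lessgtr p_j$ and using $\sum_F \mu_{j,F}/p_j=1$) gives
\begin{equation*}
(\mu(B_j)\wedge\hat\mu(B_j))\sum_{F\in\mathcal A_\ell}|\tilde{\hat\mu}_j(F)-\tilde\mu_j(F)|\;\le\; 2\sum_{F\in\mathcal A_\ell}|\hat\mu_{j,F}-\mu_{j,F}|.
\end{equation*}
This is the step I expect to be the main obstacle, since the random normalizer $\mu(B_j)\wedge\hat\mu(B_j)$ is entangled with a ratio, and one has to be careful that the correction term $|p_j-\hat p_j|$ it produces is itself dominated by the cellwise sum.

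With this reduction, each cell contributes $\mathbb E|\hat\mu_{j,F}-\mu_{j,F}|\le \sqrt{\mu_{j,F}(1-\mu_{j,F})/n}$ by Jensen plus the binomial variance identity, so Cauchy--Schwarz over $\mathcal A_\ell$ gives
\begin{equation*}
\mathbb E\sum_{F\in\mathcal A_\ell}|\hat\mu_{j,F}-\mu_{j,F}|\;\le\;\sqrt{\bar N_\ell\,p_j/n},
\end{equation*}
while the trivial bound $\sum_F|\hat\mu_{j,F}-\mu_{j,F}|\le p_j+\hat p_j$ yields $\mathbb E\sum_F|\hat\mu_{j,F}-\mu_{j,F}|\le 2p_j$; together these produce the minimum $[p_j\wedge(\bar N_\ell p_j/n)^{1/2}]$. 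The residual term $\mathbb E|\mu(B_j)-\hat\mu(B_j)|$ from telescoping satisfies the same pair of bounds (with $\bar N_0$ absorbed into a constant) and is therefore absorbed into the $\ell=0$ summand.

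Finally, substituting $p_j\le 2^{q}\cdot 2^{-qj}$, collecting the $3^{-p\ell^*}$ truncation error, reinstating the factor $M_q^p$ from the initial rescaling, and rolling the various numerical constants into $c_{p,q}$ produces exactly \eqref{eq:thm_gen}. The rest is bookkeeping; the two arithmetic ingredients that actually do the work are the variance bound on $|\hat\mu_{j,F}-\mu_{j,F}|$ and Markov's inequality on $\rho(X)$.
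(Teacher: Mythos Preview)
Your proposal is correct and follows essentially the same route as the paper: telescope via \Cref{lem:telescope}, apply \Cref{lem:bounded} inside each ring, remove the random normalizer by the triangle-inequality manipulation you describe (the paper does it without the case split, simply bounding $p_j\wedge\hat p_j\le \mu(B_j)$ and then arguing as you do), and finish with the binomial variance bound, Cauchy--Schwarz over cells, and Markov's inequality on $\rho(X)$. The one point you gloss over is that \Cref{lem:bounded} requires the partitions $\mathcal A_\ell$ to be \emph{nested} with $|\mathcal A_\ell|\le\bar N_\ell$, which does not follow automatically from the individual $\epsilon_\ell$-coverings; the paper handles this by invoking Proposition~3 of \cite{WeedB17}.
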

\Cref{thm:general} looks similar to its counterparts in \cite{DereichSS13,FournierG15,WeedB17,SinghP18}, and has combined advantages from them.  Comparing to \cite{DereichSS13,FournierG15}, \eqref{eq:thm_gen} uses a finite sum over $\ell$ which allows for the use of customized $\epsilon$-partitions. We shall see in \Cref{sec:euclidean} that this can improve the dependence on the Euclidean dimensionality $d$ when $\mathcal X=\mathbb R^{d}$.  This also allows us to cover the case of $\mathcal X=L_2$ with appropriately chosen $\rho$ functions, as detailed in \Cref{sec:function}.  On the other hand, \cite{WeedB17,SinghP18} do not have the outer sum over $j$ and only have the term for $j=0$, which only works for bounded support. We prove \Cref{thm:general} in \Cref{app:proof_gen}. 

The details in \eqref{eq:thm_gen} reflect the trade-off between the depth of the partition (indexed by $\ell$), the sample size $n$, and the importance of each telescoping layer $B_j$.
The most intriguing part of \eqref{eq:thm_gen} is the term $2^{-qj}\wedge\left(\bar N_\ell 2^{-qj}n^{-1}\right)^{1/2}$, where the ``$\wedge$'' comes from the fact $\mathbb E|\mu(F)-\hat\mu(F)|\le (2\mu(F))\wedge (\mu(F)/n)^{1/2}$ for any $F$.  It suggests that for different layers $B_j$, the critical partition levels $\ell_j^*$ (the value of $\ell$ such that $2^{-qj}\asymp \left(\bar N_\ell 2^{-qj}n^{-1}\right)^{1/2}$) will be different.

\section{Euclidean spaces: optimal dependence on $d$}\label{sec:euclidean}
In this section we assume $\mathcal X=\mathbb R^d$, together with $\rho(x)=\|x\|$, the Euclidean norm.

\begin{theorem}\label{thm:euclidean}
  If $\mathcal X=\mathbb R^d$, $\rho(x)=\|x\|$ and $\|X\|_q= M_q<\infty$ for $X\sim\mu$ and some $q>p\ge 1$, then
  \begin{align}\label{eq:thm_euclidean}
    \mathbb E W_p(\hat\mu,\mu)\le c_{p,q} M_q n^{-\left[\frac{1}{(2p)\vee d}\wedge (\frac{1}{p}-\frac{1}{q})\right]} (\log n)^{\zeta_{p,q,d}/p}\,,
  \end{align}
  where $c_{p,q}$ is a constant depending only on $(p,q)$ (not $d$), and
  $$
  \zeta_{p,q,d}=\left\{\begin{array}{ll}
    2 & \text{ if } d=q=2p\,,\\
    1 & \text{ if ``} d\neq 2p \text{ and }q=\frac{dp}{d-p} \wedge 2p \text{'' or ``} q>d=2p\text{''}\,,\\
    0 & \text{ otherwise.}
  \end{array}\right.
  $$
\end{theorem}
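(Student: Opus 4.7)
The strategy is to apply \Cref{thm:general} with $\rho(x)=\|x\|$, so that $B_{0,\rho}$ is the closed Euclidean unit ball, and then carefully optimize the resulting double sum in $j$ and $\ell$. A standard volume argument gives $\bar N_\ell\le C^d 3^{\ell d}$ for an absolute constant $C$ (e.g.\ $C=7$ via $N_\epsilon(\mathbb B_0^1)\le (1+2/\epsilon)^d$). Substituting into \eqref{eq:thm_gen}, the bound takes the form
\begin{align*}
\mathbb E W_p^p(\hat\mu,\mu)\le c_{p,q}M_q^p \sum_{j\ge 0}2^{pj}\left\{2^{-qj}3^{-p\ell^*}+\sum_{\ell=0}^{\ell^*}3^{-p\ell}\bigl[2^{-qj}\wedge (C^{d}3^{\ell d}2^{-qj}n^{-1})^{1/2}\bigr]\right\},
\end{align*}
and the plan is to bound this by $c_{p,q}M_q^p\, n^{-\alpha}(\log n)^{\zeta_{p,q,d}}$ with $\alpha=\min\bigl(p/((2p)\vee d),\,1-p/q\bigr)$.

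For each fixed $j$, I would first identify the critical depth $\ell_j^*$ satisfying $C^{d}3^{\ell_j^* d}\asymp 2^{-qj}n$, at which the two terms inside the $\wedge$ agree. Splitting the inner sum at $\ell_j^*$ reduces it to two geometric series: $\sum_{\ell\le \ell_j^*}3^{-p\ell}\cdot C^{d/2}3^{\ell d/2}2^{-qj/2}n^{-1/2}$ and $\sum_{\ell_j^*<\ell\le \ell^*}3^{-p\ell}\cdot 2^{-qj}$. The dependence on $d$ of both sums is governed by the sign of $d/2-p$. When $d>2p$, the first sum is dominated by $\ell=\ell_j^*$ and I expect the two dimension-dependent factors $C^{d/2}$ and $3^{(d/2-p)\ell_j^*}$ to collapse, by construction of $\ell_j^*$, into a single factor $C^p n^{-p/d}2^{-qj(1-p/d)}$; this is the key mechanism that removes the exponential dependence on $d$. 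When $d<2p$, the series converges and yields $2^{-qj/2}n^{-1/2}$. The degenerate $d=2p$ case picks up an extra factor of $\ell_j^*\asymp (\log n)/d$. Choosing $\ell^*$ so that $3^{\ell^*}\asymp (n/C^d)^{1/d}$ makes the leading $3^{-p\ell^*}$ term comparable with the inner contribution and, again, produces a $d$-independent constant.

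Step two is the outer sum over $j$. In each of the above regimes the bound at level $j$ is of the form $2^{pj}\cdot (\text{rate})\cdot 2^{-\beta q j}$ for some $\beta\in\{1,\,1-p/d,\,1/2\}$. If $p<\beta q$ the $j$-sum converges and the $j=0$ term dominates, giving the ``bulk'' rate $n^{-p/d}$ or $n^{-1/2}$. If $p\ge \beta q$ the sum is dominated by its boundary, which one identifies as $j_*\asymp (\log n)/q$ (the crossover between the two branches of the $\wedge$); evaluating the bound at $j_*$ yields exactly $n^{-(1-p/q)}$, the ``tail'' rate. Taking the worst of these two produces the exponent $\alpha$, and at each boundary---$d=2p$, $q=2p$, $q=dp/(d-p)$---one geometric series marginally diverges and contributes a $\log n$ factor, leading to the three cases for $\zeta_{p,q,d}$.

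The main obstacle is the bookkeeping at these three boundaries: there are simultaneous ``ties'' in $\ell$ (when $d=2p$) and in $j$ (when $q$ equals $2p$ or $dp/(d-p)$), and each marginally divergent geometric series contributes at most one logarithm, so the case ``$d=q=2p$'' compounds two logarithms while the mixed boundary cases contribute only one. A secondary subtlety, already highlighted above, is to verify that the $C^d$ factor from the covering number is fully absorbed at the optimal $\ell_j^*$ into a constant depending only on $(p,q)$---this is exactly what gives the improvement over \cite{DereichSS13,FournierG15,WeedB17,SinghP18} and is the reason \Cref{thm:euclidean} is dimension-free in its prefactor. Finally, a $p$-th root yields \eqref{eq:thm_euclidean}.
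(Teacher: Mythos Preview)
Your plan is correct and matches the paper's proof closely: apply \Cref{thm:general} with the unit ball, split the inner $\ell$-sum at the crossover $\ell_j^*$, and use that by construction $C^{d/2}3^{(d/2-p)\ell_j^*}$ collapses to $C^p(2^{-qj}n)^{1/2-p/d}$, which is exactly the mechanism that kills the exponential $d$-dependence; the case split $d\lessgtr 2p$ for the inner sum and $q\lessgtr 2p$, $q\lessgtr dp/(d-p)$ for the outer sum, with logarithms at the ties, is also the paper's structure.

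There is one point where your sketch would fail as written. You assert that the outer sum, evaluated at the boundary $j_*$, ``yields exactly $n^{-(1-p/q)}$''. But $j_*$ is defined by $2^{qj_*}\asymp n/C^d$, so the tail $\sum_{j>j_*}2^{(p-q)j}\asymp (n/C^d)^{-(1-p/q)}=C^{d(1-p/q)}n^{-(1-p/q)}$ still carries a $d$-dependent factor, and your $\ell_j^*$-cancellation mechanism does not apply here (there is no inner sum left to absorb it). When $d\le 2p$ this is harmless since $C^{d(1-p/q)}\le C^{2p}$, but for large $d$ it is not. The paper's fix is to first dispose of the regime $d\gtrsim \log n$, where the claim is trivial because $n^{-1/d}$ is bounded below by a constant; once $d\le c'\log n$ one has $C^d\le n^{1/2}$, and a short sub-case analysis (on $d\le 2qp/(q-p)$ versus $d>2qp/(q-p)$) then shows this tail is dominated by $n^{-p/d}$. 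You should make this step explicit.
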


\begin{remark}
\Cref{thm:euclidean} can be compared with Theorem 1 of \cite{FournierG15} and Example 2 of \cite{SinghP18} (the latter assumes bounded support, which corresponds to the case of $q=\infty$).  The key difference is that our constant $c_{p,q}M_q$ does not depend on $d$, while in all existing similar results the constants depend on $d$ in an unbounded manner. Such an improved dependence on $d$ makes a qualitative difference when we want to project the infinite dimensional sample points onto a subspace whose dimension increases as the sample size $n$.  This is particularly the case in the latent space network representation example described in \Cref{sec:motivating}.
\end{remark}

\begin{remark}
  The logarithm factor in \eqref{eq:thm_euclidean} is active only in the boundary cases such as $d=2p$ or $q=2p$, and is upper bounded by $(\log n)^{2/p}$ in the worst case.  For the cases without such a logarithm factor, the optimality of the main factor $n^{-\left[\frac{1}{(2p)\vee d}\wedge (\frac{1}{p}-\frac{1}{q})\right]}$ has been discussed in detail in \cite{FournierG15}.
  \end{remark}

We give the proof of \Cref{thm:euclidean} in \Cref{app:special_cases}. To highlight the novel ingredients in our proof, here we consider the case $d>2p$. 

\textit{Suboptimality of hypercube partition.}
If we follow the proof of \cite{FournierG15} that uses $B_0=[-1,1]^{d}$ and $\mathcal A_\ell$ as dyadic partition of $B_0$ with hypercubes of side length $2^{1-\ell}$, then the constant factor in \eqref{eq:thm_euclidean} will have a factor of $\sqrt{d}$ that comes from the diameter of $B_0$.  

\textit{Covering numbers of the unit Euclidean ball.}
To avoid such a $\sqrt{d}$ factor we can choose $B_0=\mathbb B_0^1$, the $d$-dimensional unit ball centered at $0$.  However, the best upper bound for the covering number of $\mathbb B_0^1$ is $N_\epsilon(\mathbb B_0^1)\le \epsilon^{-d}(d+1)^c$ for some constant $c\ge 3/2$ \citep[][Theorem 3.1]{Verger-Gaugry05}.  Thus we need to work with the upper bound $\bar N_\ell\le 3^{(\ell+c_0)d}$ for some different positive constant $c_0>1$. Directly factoring out the $3^{c_0d/2}$ factor from the $\bar N_\ell^{1/2}$ term in \eqref{eq:thm_gen} will bring the undesirable (and indeed unnecessary) $3^{c_0d/2}$ factor to the final upper bound.

Our proof avoids this by directly keeping track of each $\bar N_\ell$. The effect of $d$ shows up differently in \eqref{eq:thm_gen} for small and large value of $j$'s, which are controlled by different techniques.
Here we briefly describe the case when $j$ is small. When $d>2p$, for small values of $j$ the main contributing term in the inner sum of \eqref{eq:thm_gen} is $3^{-p\ell} \bar N_\ell^{1/2}2^{-qj/2}n^{-1/2}$, which is an increasing geometric sequence and hence bounded by the last term. The key observation here is that the last term in this sequence is bounded by $3^{-p\ell}2^{-qj}$ by construction, and hence there is no need to use the detailed formula for $\bar N_\ell$ as a function of $(\ell,d)$.  This is indeed a benefit of the abstract partition scheme where the diameter of covering balls is dimension-free.
%

\section{Functional spaces}\label{sec:function}
In this section we consider the separable Hilbert space $\mathcal X=L_2=\{x\in \mathbb R^\infty:\sum_{m=1}^\infty x_m^2<\infty\}$.  This functional space is isomorphic to $L_2([0,1])$, the space of square integrable functions on $[0,1]$, which is a commonly assumed data space in functional data analysis.

Now let $\mu$ be a probability measure on $L_2$. As discussed in \Cref{sec:telescope}, we can use an ellipsoidal moment condition on $\mu$ to deal with the infinite dimensionality.  
The function $\rho$ used for our moment conditions has the following form
\begin{align*}
  \rho_\tau(x)=\left[\sum_{m=1}^\infty \left(\frac{x_m}{\tau_m}\right)^2\right]^{1/2}\,,
\end{align*}
where $\tau=(\tau_m:m\ge 1)$ is a sequence of positive numbers with $\sup_{m}\tau_m\le 1$.

The moment condition \eqref{eq:q_moment} becomes stronger if $\tau_m$'s are small. We consider two types of such $\tau$ sequences,
 which correspond to different speeds at which $\tau_m$ vanishes.  Relevance of these scenarios in functional data analysis is discussed in \Cref{sec:fpc}.
 
We adopt a minimax perspective to illustrate the optimality of the upper bounds. Consider a class of distributions with a specified tail behavior, if we can show that a lower bound of the same rate is achieved by at least one member in this class, then the upper bound rate cannot be improved uniformly within such a class of distributions.

\subsection{Polynomial decay.}\label{sec:poly}
In this case we consider $\tau$ sequences with 
\begin{equation}\label{eq:poly_tau}
  \tau_m=m^{-b}\,,~~\forall~m\ge 1
\end{equation} for some $b> 1/2$.

The class of distributions of interest are those satisfy the moment condition \eqref{eq:q_moment} with $\rho=\rho_\tau$ and $\tau$ given in \eqref{eq:poly_tau}.
In this case $\rho_\tau(x)=[\sum_{m}(m^b x_m)^2]^{1/2}$. Formally,
define the distribution class
\begin{equation}\label{eq:class_poly}
  \mathcal P_{\rm poly}(q,b,M_q):=\left\{\mu:~\mathbb E_{X\sim \mu}\left[\sum_{m=1}^\infty \left(m^b X_m\right)^2\right]^{\frac{q}{2}}\le M_q^q\right\}\,.
\end{equation}

\begin{theorem}\label{thm:poly}
  If $p,q,b$ are constants such that $1\le p <q$ and $b>1/2$, then there exist positive constants $\underline{c}_{p,q,b}$, $\bar{c}_{p,q,b}$ depending on $(p,q,b)$ such that
  \begin{align*}
  \underline{c}_{p,q,b} M_q (\log n)^{-b} \le \sup_{\mu\in\mathcal P_{\rm poly}(q,b,M_q)}\mathbb E W_p(\hat\mu,\mu)\le \bar{c}_{p,q,b}M_q(\log n)^{-b}\,.
  \end{align*}
\end{theorem}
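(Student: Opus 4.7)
The upper bound will follow from \Cref{thm:general} applied with $\rho = \rho_\tau$ and $\tau_m = m^{-b}$. The key input is the classical entropy estimate for the polynomial-decay ellipsoid $B_{0,\rho} = \{x \in L_2 : \sum_m (m^b x_m)^2 \le 1\}$:
\[
\log \bar N_\ell(B_{0,\rho}) \;\le\; \kappa_b \cdot 3^{\ell/b}\,,\qquad \ell \ge 0,
\]
for some $\kappa_b$ depending only on $b$. This follows by truncating $B_{0,\rho}$ to its first $M_\epsilon \asymp \epsilon^{-1/b}$ coordinates (beyond which the tail has $L_2$-norm $\le \epsilon/2$) and covering the resulting finite-dimensional ellipsoid volumetrically, with $\epsilon = 3^{-(\ell+1)}$.

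Substituting into \cref{eq:thm_gen} and sending $\ell^* \to \infty$, the task is to control, for each $j \ge 0$, the inner sum $S_j := \sum_{\ell \ge 0} 3^{-p\ell}[2^{-qj} \wedge (\bar N_\ell\, 2^{-qj}/n)^{1/2}]$. I define the crossover $\ell_j$ by $\bar N_{\ell_j} \asymp 2^{-qj}n$, which gives $3^{\ell_j/b} \asymp \log n - qj\log 2$ whenever the right side is positive. Because $\bar N_\ell^{1/2} = \exp(\kappa_b\, 3^{\ell/b}/2)$ grows super-geometrically in $\ell$ while $3^{-p\ell}$ decays only geometrically, the partial sum over $\ell \le \ell_j$ is dominated by its last term, and the tail over $\ell > \ell_j$ is a geometric series in $3^{-p\ell} \cdot 2^{-qj}$. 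Both estimates yield
\[
S_j \;\lesssim\; 3^{-p\ell_j}\, 2^{-qj} \;\asymp\; (\log n - qj\log 2)^{-pb}\, 2^{-qj}
\]
for $qj\log 2 \le \log n$, and $S_j \lesssim 2^{-qj}$ beyond that. The outer sum $\sum_j 2^{pj} S_j$ is then dominated by terms with $qj\log 2 \le (\log n)/2$, contributing order $(\log n)^{-pb} \sum_j 2^{(p-q)j} \asymp (\log n)^{-pb}$ (the geometric series converges since $q > p$), with the tail in $j$ negligible. Raising to the $1/p$ power gives the upper bound.

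For the lower bound, I will exhibit a measure $\mu^* \in \mathcal P_{\rm poly}(q,b,M_q)$ realizing the rate. A matching entropy \emph{lower} bound for the same ellipsoid yields an $\epsilon$-separated subset $\{v_1,\ldots,v_N\} \subset B_{0,\rho}$ of size $N \asymp \exp(c_b\, \epsilon^{-1/b})$. Choose $\epsilon = c_0 (\log n)^{-b}$ with $c_0$ small enough that $N \ge 2n$, and let $\mu^*$ be the uniform measure on $\{M_q v_1,\ldots,M_q v_N\}$; since $\rho_\tau(M_q v_i) \le M_q$ deterministically, $\mu^* \in \mathcal P_{\rm poly}(q,b,M_q)$. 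For the empirical measure $\hat\mu^* = n^{-1}\sum_k \delta_{X_k}$ with weights $\hat p_i = n^{-1}\#\{k: X_k = M_q v_i\}$, any coupling must move a total mass of $\sum_i (\hat p_i - 1/N)_+$ between distinct support points, each pair at distance at least $M_q\epsilon$. Hence $W_1(\hat\mu^*, \mu^*) \ge M_q\,\epsilon \sum_i (\hat p_i - 1/N)_+$. A standard coupon-collector calculation under $N \ge 2n$ gives $\mathbb E \sum_i (\hat p_i - 1/N)_+ \ge 1/2 - o(1)$, so $\mathbb E W_1(\hat\mu^*,\mu^*) \gtrsim M_q (\log n)^{-b}$. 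The inequality $W_p \ge W_1$ for $p \ge 1$ then completes the lower bound.

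\textbf{Main obstacle.} The delicate step is the double-sum balancing for the upper bound: the crossover $\ell_j$ depends nontrivially on $j$, and one must verify that the interplay between the super-geometric growth of $\bar N_\ell^{1/2}$ and the geometric decay of $3^{-p\ell}$ genuinely concentrates $S_j$ at its crossover term, uniformly in $j$, so that the outer sum over $j$ collapses via $q > p$. The entropy upper and lower bounds for the polynomial ellipsoid and the coupon-collector lower bound are standard and should not present serious difficulty.
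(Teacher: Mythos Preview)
Your proposal is correct and follows essentially the same approach as the paper for both bounds: the upper bound via \Cref{thm:general} with the ellipsoid entropy estimate $\log\bar N_\ell\lesssim 3^{\ell/b}$, crossover $\ell_j$, and super-geometric collapse of the inner sum is exactly the paper's argument (the paper keeps $\ell^*=\ell_0^*$ finite rather than sending $\ell^*\to\infty$, but this is cosmetic). The only notable tactical difference is in the lower bound: the paper places uniform mass on exactly $n$ separated points in $B_0$ and invokes McDiarmid's inequality to show the empty-bin fraction $\kappa_n$ concentrates near $e^{-1}$, whereas your choice of $N\ge 2n$ separated points makes at least half the bins empty deterministically by pigeonhole, so $\sum_i(\hat p_i-1/N)_+=\sum_i(1/N-\hat p_i)_+\ge 1/2$ without any concentration step --- a mild simplification of the same idea.
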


The poly-log rate in \Cref{thm:poly} comes from the fact that the metric entropy (logarithm of the covering number) of $B_{0,\rho_\tau}$ has a polynomial dependence on $\epsilon^{-1}$ and hence an exponential dependence on $\ell$ if $\epsilon=3^{-(\ell+1)}$. This is in sharp contrast with the Euclidean case, where the metric entropy is a linear function of $\log(\epsilon^{-1})$ and hence a linear function of $\ell$ if $\epsilon=3^{-(\ell+1)}$.

One can compare \Cref{thm:poly} with Example 4 of \cite{SinghP18}, which independently provides similar matching upper and lower bounds for a different function class.  To make them directly comparable, we consider a typical special case where $\mathcal X$ consists of Lipschitz functions on $[0,1]$ satisfying $f(0)=f(1)$. Such an $\mathcal X$ is a H\"{o}lder class, so example 4 of \cite{SinghP18} suggests an upper bound of order $(\log n)^{-1}$. On the other hand, the H\"{o}lder class is a subset of $\mathcal P_{\rm poly}(q,b,M_q)$ with $b=1$ under the trigonometric basis, which is a Sobolev ellipsoid (see Chapter 1.7.1 of \cite{Tsybakov09}).  Therefore, \Cref{thm:poly} also implies an upper bound of order $(\log n)^{-1}$.

\subsection{Exponential decay.}\label{sec:exp}
In this case we consider $\tau$ sequences with exponential decay:
\begin{equation}\label{eq:exp_tau}
  \tau_m=\gamma^{-(m-1)}\,,~~\forall~m\ge 1
\end{equation}
for some $\gamma>1$.

The corresponding class of distributions satisfy the moment condition \eqref{eq:q_moment} with $\rho=\rho_\tau$ and $\tau$ given in \eqref{eq:exp_tau}. In this case $\rho_\tau(x)=[\sum_{m}(\gamma^{m-1}x_m)^2]^{1/2}$.
Define the distribution class
\begin{equation}\label{eq:class_exp}
  \mathcal P_{\rm exp}(q,\gamma,M_q):=\left\{\mu:~\mathbb E_{X\sim \mu}\left[\sum_{m=1}^\infty \left(\gamma^{m-1} X_m\right)^2\right]^{\frac{q}{2}}\le M_q^q\right\}\,.
\end{equation}

\begin{theorem}\label{thm:exp}
  If $p,q,\gamma$ are constants such that $1\le p<q$ and $\gamma>1$, then there exist positive constants $\underline{c}_{p,q,\gamma}$, $\bar{c}_{p,q,\gamma}$ depending only on $(p,q,\gamma)$ such that
  \begin{align*}
    \underline{c}_{p,q,\gamma}M_qe^{-\sqrt{2\log\gamma\log n}}\le
     \sup_{\mu\in\mathcal P_{\rm exp}(q,\gamma,M_q)}\mathbb E W_p(\hat\mu,\mu)
    \le \bar{c}_{p,q,\gamma}M_qe^{-\sqrt{2\log\gamma\log n}}\,.
  \end{align*}
\end{theorem}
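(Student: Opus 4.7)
The plan is to follow the template of \Cref{thm:poly} but with a metric-entropy estimate for the ellipsoid $B_{0,\rho_\tau} = \{x \in L_2 : \sum_{m\ge 1}\gamma^{2(m-1)}x_m^2 \le 1\}$ that is \emph{quadratic} in $\log(\epsilon^{-1})$ rather than polynomial; this quadratic growth is exactly what converts the $\log n$ rate of the Euclidean case into a $\sqrt{\log n}$ rate here. The first step for the upper bound is therefore to estimate $\bar N_\ell = N_{3^{-(\ell+1)}}(B_{0,\rho_\tau})$. Since $x \in B_{0,\rho_\tau}$ forces $|x_m| \le \gamma^{-(m-1)}$, truncating at $M^*(\epsilon) := 1 + \lceil \log(\epsilon^{-1})/\log\gamma\rceil$ leaves an $L_2$ tail of order at most $\epsilon$, so it suffices to cover the truncated ellipsoid with semi-axes $(\gamma^{-(m-1)})_{m \le M^*(\epsilon)}$ in $\mathbb R^{M^*(\epsilon)}$. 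A standard volumetric bound then gives $\log\bar N_\ell \le \sum_{m=1}^{M^*(\epsilon_\ell)}\log(C\gamma^{-(m-1)}/\epsilon_\ell) = ((\ell+1)\log 3)^2/(2\log\gamma) + O(\ell)$, where $\epsilon_\ell = 3^{-(\ell+1)}$.

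I would then apply \Cref{thm:general} with $\rho = \rho_\tau$ and $\ell^* = \lceil \sqrt{2\log\gamma\,\log n}/\log 3\rceil$, so that $\bar N_{\ell^*} \asymp n$ and $3^{-p\ell^*} \asymp e^{-p\sqrt{2\log\gamma\log n}}$. For each telescope layer $j \ge 0$ in \eqref{eq:thm_gen}, let $\ell_j^*$ denote the level at which $(\bar N_\ell/n)^{1/2} = 2^{-qj/2}$. Because $\log\bar N_\ell$ is quadratic in $\ell$, the summands $3^{-p\ell}(\bar N_\ell/n)^{1/2}$ form a super-geometric sequence whose partial sum up to $\ell_j^*$ is dominated by its last term, which is $\lesssim 3^{-p\ell_j^*}\,2^{-qj/2}$; the tail over $\ell > \ell_j^*$ is a genuine geometric series of the same order, as is the boundary term $2^{-qj}3^{-p\ell^*}$. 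Multiplying by $2^{pj}$ and summing over $j$ then exploits $q > p$ to produce a convergent series dominated by its $j=0$ contribution, yielding $\mathbb E W_p^p(\hat\mu,\mu) \lesssim M_q^p\,e^{-p\sqrt{2\log\gamma\log n}}$; Jensen's inequality delivers the advertised upper bound on $\mathbb E W_p$.

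For the matching lower bound, a volume-ratio argument produces a $2\epsilon_n$-separated subset $\{z_1,\ldots,z_N\} \subset M_q B_{0,\rho_\tau}$ of cardinality $N \asymp n$ when $\epsilon_n \asymp M_q\,e^{-\sqrt{2\log\gamma\log n}}$; taking $\mu$ uniform on this set places $\mu$ in $\mathcal P_{\rm exp}(q,\gamma,M_q)$, and a birthday-style argument shows that with constant probability a constant fraction of the $z_i$ go unsampled by $\hat\mu$, so any coupling must move a constant fraction of mass over distance at least $\epsilon_n$, giving $\mathbb E W_p(\hat\mu,\mu) \gtrsim \epsilon_n$. The main technical obstacle is the matching sharp entropy estimate $\log N_\epsilon(B_{0,\rho_\tau}) = (1+o(1))(\log\epsilon^{-1})^2/(2\log\gamma)$ on both sides, because its leading constant is exactly what pins down the $\sqrt{2\log\gamma}$ appearing in the exponent; the remaining telescope and partition-level bookkeeping is entirely analogous to \Cref{thm:poly}.
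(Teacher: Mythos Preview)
Your proposal is correct and follows essentially the same route as the paper: the paper isolates the quadratic entropy estimate as a separate lemma (\Cref{lem:metric_entropy_exp}), then plugs it into \Cref{thm:general} with the same choice of $\ell^*$ and $\ell_j^*$, the same super-geometric control of the inner $\ell$-sum, and the same separated-set plus unsampled-fraction argument (via McDiarmid rather than a birthday bound) for the lower bound. The one place to be slightly more careful than your sketch suggests is the outer $j$-sum of $2^{-(q-p)j}3^{-p\ell_j^*}$, since $\ell_j^*$ \emph{decreases} in $j$ and you must check (as the paper does by adjusting the constant in $\ell_j^*$) that the factor $2^{-(q-p)j}$ still dominates; but this is a bookkeeping detail, not a gap.
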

The lower and upper bounds in \Cref{thm:exp} seem to be new in the literature. \Cref{thm:exp} shows that, even when the coordinates decay exponentially, the functional space is still fundamentally different from Euclidean spaces. As we will see in \Cref{lem:metric_entropy_exp} below, the metric entropy is a quadratic, instead of linear, function of $\log(\epsilon^{-1})$, so we can only have a sub-polynomial rate of convergence in \Cref{thm:exp}.  Moreover, the exponential decay makes the problem qualitatively different from the polynomial decay, as this sub-polynomial rate is faster than the poly-log rate in \Cref{thm:poly}.

In both \Cref{thm:poly,thm:exp}, our proofs reflect that when $\mathcal X$ has higher dimensionality than Euclidean space, $W_p(\hat\mu,\mu)$ is determined by the value $\epsilon$ such that $N_\epsilon(B_0)\asymp n$.

The lower and upper bounds of the covering numbers $N_\epsilon(B_0)$ and $\bar N_\ell$ required in the proof of \Cref{thm:exp} are provided in the following lemma, which adapts the covering number bounds for finite dimensional ellipsoids in \cite{DumerPP04,Dumer06} to infinite dimensional ellipsoids using a truncation argument. Equivalent forms of this result have appeared in the literature \citep[see, for example,][Theorem 7]{Williamson01}.  The version presented here is more direct applicable for our purpose.
\begin{lemma}\label{lem:metric_entropy_exp}
  Let $B_0=\{x\in L_2:\rho_\tau(x)\le 1\}$ with $\tau$ given in \eqref{eq:exp_tau}.  Then for $\epsilon\in(0,1]$,
  \begin{align*}
    \log N_\epsilon(B_0)\ge \frac{\left[\log(\epsilon^{-1})\right]^{2}}{2\log\gamma}
  \end{align*}
  and for $\ell\in \mathbb N$,
  \begin{align*}
    \log \bar N_\ell = \log N_{3^{-(\ell+1)}}(B_0) \le c_\gamma(\ell+c_1)^2+\log c_0
  \end{align*}
  with $c_\gamma=\frac{(\log 3)^2}{2\log\gamma}$ and $c_0,c_1$ positive constants depending on $\gamma$.
\end{lemma}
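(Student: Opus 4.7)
The plan is to reduce both directions to finite-dimensional ellipsoids by restricting to the projection $\pi_M:L_2\to\mathbb R^M$ onto the first $M$ coordinates, with $M$ tuned near the critical scale $M^\star=\log(\epsilon^{-1})/\log\gamma$. The common arithmetic ingredient is that, writing $L=\log(\epsilon^{-1})$ and using $\tau_m=\gamma^{-(m-1)}$,
\begin{equation*}
\sum_{m=1}^M\log(\tau_m/\epsilon)=ML-\tfrac12(\log\gamma)\,M(M-1),
\end{equation*}
a downward-opening quadratic in $M$ whose maximum near $M^\star$ equals $L^2/(2\log\gamma)+O(L)$.

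For the lower bound, $\pi_M$ is a contraction in $L_2$, so any $\epsilon$-cover of $B_0$ induces an $\epsilon$-cover of $E_M:=\pi_M(B_0)$, and hence $N_\epsilon(B_0)\ge N_\epsilon(E_M)$. The image $E_M$ is the finite-dimensional ellipsoid with semi-axes $\tau_1,\ldots,\tau_M$ and Lebesgue volume $\mathrm{vol}(B_M^{(2)})\prod_{m\le M}\tau_m$, so the elementary volumetric bound
\begin{equation*}
N_\epsilon(E_M)\ge \mathrm{vol}(E_M)/\mathrm{vol}(\epsilon B_M^{(2)})=\epsilon^{-M}\prod_{m=1}^M\tau_m
\end{equation*}
yields $\log N_\epsilon(B_0)\ge ML-\tfrac12(\log\gamma)M(M-1)$ for every $M$. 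Choosing $M=\lceil L/\log\gamma\rceil$ and evaluating the quadratic gives the stated $L^2/(2\log\gamma)$ bound after a short arithmetic check.

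For the upper bound, for each $x\in B_0$ the tail satisfies $\sum_{m>M}x_m^2\le\tau_{M+1}^2\rho_\tau(x)^2\le\gamma^{-2M}$, so choosing $M=\lceil\log(2\epsilon^{-1})/\log\gamma\rceil$ makes the tail $L_2$-norm at most $\epsilon/2$. It then suffices to cover the truncated ellipsoid $E_M\subset\mathbb R^M$ by balls of radius $\epsilon/2$ and enlarge to radius $\epsilon$. I would invoke the sharp finite-dimensional ellipsoid bound of \cite{DumerPP04,Dumer06},
\begin{equation*}
\log N_{\epsilon/2}(E_M)\le\sum_{m=1}^M\log_+(c'\tau_m/\epsilon)+c''M,
\end{equation*}
and combine it with the arithmetic identity (with $\epsilon/2$ in place of $\epsilon$) and $M\le L/\log\gamma+O(1)$ to get $\log N_\epsilon(B_0)\le L^2/(2\log\gamma)+O(L)$. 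Substituting $\epsilon=3^{-(\ell+1)}$, so $L=(\ell+1)\log 3$, recasts this as $c_\gamma(\ell+1)^2+O(\ell)$; since $(\ell+c_1)^2-(\ell+1)^2=2(c_1-1)\ell+(c_1^2-1)$ is linear in $\ell$ with arbitrary positive slope, taking $c_1$ large enough absorbs the $O(\ell)$ remainder into $c_\gamma(\ell+c_1)^2$, with the leftover constant folded into $\log c_0$.

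The main obstacle is the tightness of the upper-bound correction: the gap $c_\gamma[(\ell+c_1)^2-(\ell+1)^2]$ is only linear in $\ell$, so a naive Cartesian grid cover of $E_M$ (which would contribute an extra $O(L\log L)$ correction from dimension-adjusted spacing) is not precise enough. The proof must rely on the sharper ellipsoid-covering estimate with $O(M)$ rather than $O(M\log M)$ correction, which is exactly why the lemma is phrased as an adaptation of \cite{DumerPP04,Dumer06}.
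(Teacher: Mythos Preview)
Your proposal is correct and follows essentially the same route as the paper: project onto the first $M\approx\log_\gamma(\epsilon^{-1})$ coordinates and use the volumetric bound $\log N_\epsilon(E_M)\ge\sum_{m\le M}\log(\tau_m/\epsilon)$ for the lower bound (the paper cites this as Theorem~1 of \cite{Dumer06}, which is exactly your volume argument), and truncate the tail plus apply the Dumer finite-ellipsoid upper bound for the other direction, then absorb the $O(\ell)$ remainder into the cross term of $(\ell+c_1)^2$. The only cosmetic differences are your $\epsilon/2$ head/tail split versus the paper's $\theta$-parameterized split $N_{\sqrt{2-\theta}\,\epsilon}(B_0)\le N_\epsilon(B_2)$, and your direct volume ratio versus the paper's citation; neither changes the argument.
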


\subsection{Functional principal components.}\label{sec:fpc}
In the previous two subsections we see that $W_p(\hat\mu,\mu)$ tends to be smaller if the majority probability mass of $\mu$ can be covered by an ellipsoid $B_0=B_{0,\rho_\tau}$, with fast decaying axes $\tau_m$.  Intuitively speaking, this corresponds to an orthogonal basis of $L_2$, denoted as $\{\phi_m:m\ge 1\}$ such that the variability of the coordinate projections $\langle X, \phi_m \rangle$ decays fast when $X\sim \mu$.  Such an explanation suggests an immediate connection to functional principal components analysis \citep{HallMW06,Ramsay06}, where one can choose $\{\phi_m:m\ge 1\}$ to be the eigenvectors of the covariance operator of $\mu$.

To facilitate discussion, we assume that $\mathbb E_{X\sim \mu}X=0$ and $\mathbb E_{X\sim \mu}\|X\|^2<\infty$. Then we can write the random function (in a separable Hilbert space, such as $L_2$) $X\sim \mu$ in its Karhunen-Lo\`{e}ve decomposition
\begin{equation}\label{eq:KL-decomp}
  X = \sum_{m=1}^\infty \sigma_m Z_m \phi_m\,,
\end{equation}
where $\sigma_1\ge \sigma_2\ge ... \ge 0$ are the standard deviations of the coordinate projections of $X$ ranked in decreasing order, and $(Z_m:m\ge 1)$ are uncorrelated random variables with mean $0$ and unit variance.  Standard theory shows that such a basis $(\psi_m:m\ge 1)$ leads to the fastest decay of the tail squared sum $\sum_{j\ge m}\sigma_j^2$ for all $m$.

The speed at which $\sigma_m$ decays as $m$ increases measures the ``regularity'' of the functional data distribution $\mu$.  Here we can make explicit connections between the eigen-decay of $\mu$ and the ellipsoidal moment conditions considered in the previous two subsections.

\begin{proposition}\label{pro:fpc} If the standardized principal component scores $(Z_m:m\ge 1)$ of $\mu$ satisfy $\sup_m \|Z_m\|_q=M<\infty$ with $q\ge 2$, then the following hold.
  
1. (Polynomial decay) If $$\sigma_m\le c_0 m^{-(b_0+1/2)}$$
for some constants $b_0>1/2$, $c_0>0$, then the moment condition \eqref{eq:q_moment} holds with $\tau$ sequence given in \eqref{eq:poly_tau} for any constant $b\in(1/2,~b_0)$ and $M_q=c_{b,b_0,q}c_0 M$, where $c_{b,b_0,q}$ is a constant depending only on $(b,b_0,q)$.

2. (Exponential decay) If $$\sigma_m\le c_0 \gamma_0^{-(m-1)}$$ for some constants $\gamma_0>1$, $c_0>0$, then the moment condition \eqref{eq:q_moment} holds with $\tau$ sequence given in \eqref{eq:exp_tau} for any $\gamma\in (1,\gamma_0)$ and $M_q=c_{\gamma,\gamma_0,q}c_0 M$ for some constant $c_{\gamma,\gamma_0,q}$ depending only on $(\gamma,\gamma_0,q)$.
\end{proposition}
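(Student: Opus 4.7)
The proof is a direct computation starting from the Karhunen--Lo\`{e}ve expansion \eqref{eq:KL-decomp}. The plan is to express $\rho_\tau(X)^2$ as a nonnegative weighted sum of $Z_m^2$, bound its $L_{q/2}$ norm by Minkowski's inequality (which is where the hypothesis $q\ge 2$ enters), and finally verify summability of the weights in each of the two decay regimes.

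Writing $X_m=\langle X,\phi_m\rangle=\sigma_m Z_m$ and setting $a_m:=\sigma_m^2/\tau_m^2$, one has
\begin{align*}
\rho_\tau(X)^2=\sum_{m\ge 1}\left(\frac{X_m}{\tau_m}\right)^2=\sum_{m\ge 1}a_m Z_m^2.
\end{align*}
Since $\|Z_m^2\|_{q/2}=\|Z_m\|_q^2\le M^2$, Minkowski's inequality in $L_{q/2}(\mathbb P)$ (valid because $q/2\ge 1$) applied to the nonnegative series gives
\begin{align*}
\|\rho_\tau(X)\|_q^2=\|\rho_\tau(X)^2\|_{q/2}\le \sum_{m\ge 1}a_m\|Z_m^2\|_{q/2}\le M^2\sum_{m\ge 1}a_m.
\end{align*}
It therefore remains only to bound $\sum_{m\ge 1}a_m$ in each of the two cases.

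In the polynomial case, $a_m\le c_0^2\, m^{2b-2b_0-1}$, and the series converges whenever $b<b_0$, with sum bounded by a constant $c_{b,b_0}$; taking the square root yields $M_q\le c_{b,b_0,q}\,c_0\,M$ for $c_{b,b_0,q}=c_{b,b_0}^{1/2}$. In the exponential case, $a_m\le c_0^2(\gamma/\gamma_0)^{2(m-1)}$, which is a geometric series summing to $(1-(\gamma/\gamma_0)^2)^{-1}$ whenever $\gamma<\gamma_0$, and the analogous conclusion follows. There is no real obstacle: the only substantive point is the use of Minkowski at exponent $q/2$, which is precisely why the hypothesis $q\ge 2$ is imposed; the exclusion of the endpoints $b=b_0$ and $\gamma=\gamma_0$ is inherited from the summability of $\sum_m a_m$.
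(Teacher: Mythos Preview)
Your argument is correct and is essentially identical to the paper's own proof: both express $\rho_\tau(X)^2$ as $\sum_m (X_m/\tau_m)^2$, apply Minkowski's inequality in $L_{q/2}$ (using $q\ge 2$), reduce to bounding $\sum_m (\sigma_m/\tau_m)^2$, and then check summability in the polynomial and exponential regimes. The only cosmetic difference is that you substitute $X_m=\sigma_m Z_m$ before applying Minkowski, whereas the paper does so afterward.
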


\section{Concentration inequalities}\label{sec:concentration}
In this section we consider the concentration of measure for $W_p(\hat\mu,\mu)$.  In particular, we are interested in the following two types of concentration inequalities.
\begin{enumerate}
  \item \emph{Mean-concentration}. The goal is to find good upper bounds of the probability $$
\mathbb P\left[|W_p(\hat\mu,\mu)-\mathbb E W_p(\hat\mu,\mu)|>t\right]
$$
for $t\in\mathbb R^+$.
\item \emph{$0$-concentration}. The goal is to upper bound
$$
\mathbb P\left[W_p(\hat\mu,\mu)>t\right]\,.
$$
\end{enumerate}

\cite{BolleyGC07,FournierG15} have considered $0$-concentration of $W_p^p(\hat\mu,\mu)$.  We argue that concentration around the mean can be more informative, since in some important cases (for example, $p=1$ and $d$ large or infinite) the rate of concentration around the mean can be much smaller than the mean value itself.  Nevertheless, a good mean-concentration can lead to a good $0$-concentration, because if $\mathbb E W_p(\hat\mu,\mu)\le R_n$ then
$$
\mathbb P\left[W_p(\hat\mu,\mu)>t\right]\le \mathbb P\left[W_p(\hat\mu,\mu)-\mathbb E W_p(\hat\mu,\mu)>(t-R_n)_+\right]\,.
$$

\subsection{A Bernstein-type McDiarmid's inequality.}
Let $X_i$ ($1\le i\le n$) be independent (not necessarily identically distributed) samples from probability distributions $\mu_i$ on spaces $\mathcal X_i$, and $X_1',...,X_n'$ be independent copies of each $X_i$.  Denote $X=(X_1,...,X_n)$ and $X_{(i)}'=(X_1,...,X_{i-1},X_i',X_{i+1},...,X_n)$, which is identical to $X$ except that the $i$th entry is replaced by $X_i'$.
Let $f:\prod_{i=1}^n\mathcal X_i \mapsto \mathbb R$ be a function such that $\mathbb E |f(X)|<\infty$, and define
$$
D_i = f(X)-f(X_{(i)}')\,.
$$

To establish concentration around the mean for a function of independent random inputs, a classical tool is McDiarmid's inequality, which requires the difference $D_i$ to be bounded with probability one.  This is not enough for our purpose if we take $f(X)=W_p(\hat\mu,\mu)$ and the support of $\mu$ is unbounded.  To fix this problem, we need a stronger version of McDiarmid's inequality that holds under weaker conditions.  In particular, we consider the following Bernstein-type moment condition.
\begin{equation}\label{eq:bernstein}
\exists~ \sigma_i,M>0 ~\text{ s.t. }~  \mathbb E(|D_i|^k\mid X_{-i}) \le \frac{1}{2}\sigma_i^2 k! M^{k-2}\,,~~\forall~~\text{integer }k\ge 2\,,~~\text{a.s.}\,,
\end{equation}
where $X_{-i}=(X_1,...,X_{i-1},X_{i+1},...,X_n)$ is the random vector obtained by removing $X_i$ from $X$.

When $f(X)=\sum_{i=1}^n X_i$, then equation \eqref{eq:bernstein} reduces to the classical Bernstein's tail condition, which is used to prove the Bernstein's inequality for sums of independent random variables.  The following theorem extends the Bernstein's inequality to general functions of independent inputs.

\begin{theorem}[Bernstein-type McDiarmid's Inequality]\label{thm:Bernstein-McDiarmid}
  Assume \eqref{eq:bernstein} holds, then
  \begin{equation}
    \mathbb P\left( f- \mathbb E f\ge t\right)\le \exp\left(-\frac{t^2}{2\sigma^2+2tM}\right)
  \end{equation}
  where $\sigma^2=\sum_{i=1}^n\sigma_i^2$\,.
\end{theorem}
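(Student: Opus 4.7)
The plan is to run a Doob martingale/Chernoff argument, converting the Bernstein tail assumption on the symmetrized differences $D_i$ into a conditional moment-generating-function bound on the Doob martingale differences. Introduce the filtration $\mathcal F_i=\sigma(X_1,\ldots,X_i)$ and the Doob differences $\Delta_i=\mathbb E[f(X)\mid\mathcal F_i]-\mathbb E[f(X)\mid\mathcal F_{i-1}]$, so that $f(X)-\mathbb E f=\sum_{i=1}^n\Delta_i$ is a martingale difference sum with $\mathbb E[\Delta_i\mid\mathcal F_{i-1}]=0$. Using the independent copy $X_i'$ to represent the inner expectation in $\mathbb E[f(X)\mid\mathcal F_{i-1}]$, rewrite $\Delta_i=\mathbb E[f(X)-f(X_{(i)}')\mid\mathcal F_i]=\mathbb E[D_i\mid\mathcal F_i]$, where the conditional expectation is over $(X_i',X_{i+1},\ldots,X_n)$. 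Jensen's inequality, the tower property, and the hypothesis (using $\mathcal F_{i-1}\subseteq\sigma(X_{-i})$) then give
\begin{equation*}
\mathbb E\bigl[|\Delta_i|^k\bigm|\mathcal F_{i-1}\bigr]\;\leq\;\mathbb E\bigl[|D_i|^k\bigm|\mathcal F_{i-1}\bigr]\;=\;\mathbb E\bigl[\,\mathbb E[|D_i|^k\mid X_{-i}]\bigm|\mathcal F_{i-1}\bigr]\;\leq\;\tfrac12\sigma_i^2\,k!\,M^{k-2}.
\end{equation*}

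Next, because $\Delta_i$ is $\mathcal F_{i-1}$-centered, the Taylor expansion $e^{\lambda\Delta_i}=1+\lambda\Delta_i+\sum_{k\geq 2}\lambda^k\Delta_i^k/k!$ combined with the previous moment bound yields, for $\lambda\in[0,1/M)$,
\begin{equation*}
\mathbb E\bigl[e^{\lambda\Delta_i}\bigm|\mathcal F_{i-1}\bigr]\;\leq\;1+\tfrac12\sigma_i^2\lambda^2\sum_{k\geq 2}(\lambda M)^{k-2}\;=\;1+\frac{\sigma_i^2\lambda^2}{2(1-\lambda M)}\;\leq\;\exp\!\left(\frac{\sigma_i^2\lambda^2}{2(1-\lambda M)}\right).
\end{equation*}
Iterating the tower property telescopes this into $\mathbb E\exp(\lambda(f-\mathbb E f))\leq\exp(\sigma^2\lambda^2/[2(1-\lambda M)])$ with $\sigma^2=\sum_i\sigma_i^2$. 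Applying Markov's inequality and optimizing with $\lambda=t/(\sigma^2+tM)\in[0,1/M)$ in the Chernoff bound $\mathbb P(f-\mathbb E f\geq t)\leq\exp(-\lambda t+\sigma^2\lambda^2/[2(1-\lambda M)])$ produces the stated exponent $-t^2/(2\sigma^2+2tM)$.

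The only non-routine step is the first one: translating the hypothesis on $D_i$, which lives on a product space enlarged by the independent copy $X_i'$, into a conditional moment bound on the martingale difference $\Delta_i$, which does not involve $X_i'$. Jensen's inequality handles this, but one must pay attention to which coordinates are being integrated out and to the inclusion $\mathcal F_{i-1}\subseteq\sigma(X_{-i})$, so that the hypothesis applies after averaging out $X_{i+1},\ldots,X_n$. Once this conditional moment bound is in hand, the rest is the textbook Bernstein MGF/Chernoff calculation.
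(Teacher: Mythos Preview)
Your proof is correct and follows essentially the same route as the paper's: Doob martingale decomposition, Jensen's inequality to pass from $\Delta_i$ to $D_i$, the Bernstein moment hypothesis to control the Taylor expansion, tower iteration, and Chernoff optimization at $\lambda=t/(\sigma^2+tM)$. The only cosmetic difference is that the paper applies Jensen directly to $e^{c\Delta_i}$ and then expands $\exp(cD_i)$ using the symmetry $\mathbb E_{X_i,X_i'}D_i=0$, whereas you first bound the conditional moments $\mathbb E[|\Delta_i|^k\mid\mathcal F_{i-1}]$ and then expand $e^{\lambda\Delta_i}$ using the martingale centering; both orderings yield the identical MGF bound.
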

\Cref{thm:Bernstein-McDiarmid} can be derived from the more general martingale concentration inequalities \citep[][Theorem 1.2A]{Pena99} (see also \cite{Geer95}). In \Cref{app:concentration}, we give a simple and short proof that is similar to the standard proofs of Bernstein's inequality and McDiarmid's inequality.

\begin{remark}
  Applying \Cref{thm:Bernstein-McDiarmid} to $-f$ leads to the same concentration bound for the event $f-\mathbb E f\le -t$.  Thus \Cref{thm:Bernstein-McDiarmid} implies the absolute deviation bound
  $$    \mathbb P\left( |f- \mathbb E f|\ge t\right)\le 2\exp\left(-\frac{t^2}{2\sigma^2+2tM}\right)\,.
$$
\end{remark}

\begin{remark}
We discuss a few related works.  \cite{Kontorovich14} extends McDiarmid's inequality to unbounded differences by assuming sub-Gaussianity of $D_i$.  It is well-known that sub-Gaussianity is strictly stronger than Bernstein's tail condition \citep{vdvWellner}. In fact, the Bernstein's tail condition is nearly equivalent to being sub-exponential. An unpublished manuscript \cite{Ying_Bern} extends McDiarmid's inequality to a Bernstein-type condition.  However, this result still assumes that the differences $D_i$ are uniformly bounded.
When $\mathcal X_1=...=\mathcal X_n=\mathcal X$ and $X_i$'s are iid, \Cref{thm:Bernstein-McDiarmid} re-discovers a result of \cite[][Proposition 3.1]{DedeckerF15}.
\end{remark}

\subsection{Concentration of $W_p(\hat\mu,\mu)$.}
  Now let $f(X)=W_p(\hat\mu,\mu)$, and $\hat\mu'$ the empirical measure given by $X_{(i)}'$.  Since the ordering of sample points does not matter in $W_p(\hat\mu,\mu)$, the particular choice of $i$ does not matter. By triangle inequality $|f(X)-f(X_{(i)}')|\le W_p(\hat\mu,\hat\mu')\le n^{-1/p}\|X_i-X_i'\|$.

  If we assume that
  \begin{equation}\label{eq:bernstein_norm}
  \mathbb E(\|X_i\|^k) \le \frac{1}{2}s^2 k!V^{k-2}\,,~~\forall~\text{integer }k\ge 2\,,
  \end{equation}
  for some constants $s$, $V$, then straight calculation shows that such $f(X)=W_p(\hat\mu,\mu)$ satisfies the Bernstein-type tail condition \eqref{eq:bernstein} with $\sigma_i = 2 s n^{-1/p}$ and $M=2V n^{-1/p}$.  Thus we have the following corollary.
  \begin{corollary}\label{cor:concentration}
    If $X_i\sim\mu$ satisfies \eqref{eq:bernstein_norm} with some constants $s$, $V$, then for all $t>0$
  \begin{align*}
    \mathbb P\left[\left|W_p(\hat\mu,\mu)-\mathbb E W_p(\hat\mu,\mu)\right|\ge t\right]\le 2\exp\left(-\frac{t^2}{8s^2n^{1-2/p}+4Vtn^{-1/p}}\right)\,.
  \end{align*}
  \end{corollary}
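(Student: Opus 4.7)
The plan is to invoke \Cref{thm:Bernstein-McDiarmid} with $f(X)=W_p(\hat\mu,\mu)$, where $X=(X_1,\ldots,X_n)$. Two ingredients have to be checked: (i) a one-coordinate stability bound of the form $|D_i|\le n^{-1/p}\|X_i-X_i'\|$, and (ii) that, conditional on $X_{-i}$, this bound inherits the Bernstein-type moment structure \eqref{eq:bernstein} from \eqref{eq:bernstein_norm} with constants matching $\sigma_i=2sn^{-1/p}$ and $M=2Vn^{-1/p}$.

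For (i), note that $\hat\mu$ and $\hat\mu'$ differ only in a single atom: the mass $1/n$ sitting at $X_i$ in $\hat\mu$ has been relocated to $X_i'$ in $\hat\mu'$, with all other atoms unchanged. The coupling that transports this $1/n$ mass directly from $X_i$ to $X_i'$ (and leaves every other atom fixed) witnesses $W_p^p(\hat\mu,\hat\mu')\le n^{-1}\|X_i-X_i'\|^p$. Combined with the triangle inequality for $W_p$, this gives $|D_i|\le W_p(\hat\mu,\hat\mu')\le n^{-1/p}\|X_i-X_i'\|$, independent of the index $i$.

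For (ii), the convexity bound $(a+b)^k\le 2^{k-1}(a^k+b^k)$ yields $\|X_i-X_i'\|^k\le 2^{k-1}(\|X_i\|^k+\|X_i'\|^k)$. Conditioning on $X_{-i}$, the pair $(X_i,X_i')$ is i.i.d.\ $\mu$ and independent of $X_{-i}$, so assumption \eqref{eq:bernstein_norm} gives
\[
\mathbb E\bigl(|D_i|^k\mid X_{-i}\bigr)\le n^{-k/p}\,2^k\,\mathbb E_\mu\|X\|^k\le \tfrac{1}{2}\bigl(2sn^{-1/p}\bigr)^2 k!\bigl(2Vn^{-1/p}\bigr)^{k-2},
\]
which is exactly \eqref{eq:bernstein} with $\sigma_i=2sn^{-1/p}$ and $M=2Vn^{-1/p}$ for every $i$.

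The remaining step is mechanical: $\sigma^2=\sum_{i=1}^n\sigma_i^2=4s^2n^{1-2/p}$, and substituting this together with $M=2Vn^{-1/p}$ into \Cref{thm:Bernstein-McDiarmid}, then applying the two-sided version from the remark immediately following it, produces the stated tail bound. I do not anticipate any serious obstacle; the only careful point is the factor of $2$ absorbed into $\sigma_i$ and $M$ through the triangle-plus-convexity step, which doubles the effective Bernstein constants relative to the single-variable hypothesis \eqref{eq:bernstein_norm} on $\|X\|$.
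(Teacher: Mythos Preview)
Your proposal is correct and follows essentially the same route as the paper: the paper also reduces to \Cref{thm:Bernstein-McDiarmid} via the triangle inequality bound $|D_i|\le n^{-1/p}\|X_i-X_i'\|$ and then asserts that a ``straight calculation'' yields \eqref{eq:bernstein} with $\sigma_i=2sn^{-1/p}$ and $M=2Vn^{-1/p}$. You have simply supplied that calculation explicitly via the convexity inequality $(a+b)^k\le 2^{k-1}(a^k+b^k)$, which is exactly what the paper leaves implicit.
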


  \Cref{cor:concentration} is useful when $p\in[1,2)$, implying that $W_p(\hat\mu,\mu)-\mathbb EW_p(\hat\mu,\mu)=O_P(n^{-(1/p-1/2)})$.  In the most interesting case $p=1$, the rate becomes the desired $n^{-1/2}$.  This result is dimension-free, assuming only the tail condition \eqref{eq:bernstein_norm}.
Condition \eqref{eq:bernstein_norm} is implied by the sub-exponential tail condition. Following \cite{vdvWellner}, for a random variable $Z$ and real number $\alpha\ge 1$, define the Orlicz $\psi_\alpha$ norm of $Z$ as
  \begin{equation}\label{eq:subexp}
  \|Z\|_{\psi_\alpha}=\inf \left\{c>0:\mathbb E  e^{|Z/c|^\alpha}\le 2\right\}\,.
  \end{equation}
  If $\left\| \|X_i\| \right\|_{\psi_1}=C<\infty$,
    then \eqref{eq:bernstein_norm} holds with $V=C$ and $s=\sqrt{2}C$.
This condition is called ``sub-exponential'' because
$\left\| \|X_i\| \right\|_{\psi_1}<\infty$ if and only if $\mathbb P(\|X_i\|\ge t)\le c e^{-c' t}$ for some positive  constants $c,c'$.

\begin{remark}
In the special case of $p=1$, there have been related mean-concentration results using variants of McDiarmid's inequality.  For example, \cite{Boissard11} also provides $n^{-1/2}$ mean-concentration for $W_1(\hat\mu,\mu)$, but requires a stronger sub-Gaussian tail of $\|X_i\|$. In a more recent work, \cite{DedeckerF15} establishes a similar mean-concentration result applicable to the case $p=1$, under a condition that can be translated to a sub-exponential tail of $\|X_i\|$ in our context.
\end{remark}

\paragraph{Euclidean spaces.}
If $\mathcal X=\mathbb R^d$ with $d>2$, and  $\left\|\|X_i\|\right\|_{\psi_1}<\infty$, the sub-exponential condition implies finiteness of all moments, and \Cref{thm:euclidean} implies that $\mathbb EW_1(\hat\mu,\mu)\le c_{1}n^{-1/d}$.
Applying the one-sided version of \Cref{cor:concentration} we get a $0$-concentration bound,
\begin{align}
  \mathbb P\left[W_1(\hat\mu,\mu)\ge t\right]\le &\mathbb P\left[W_1(\hat\mu,\mu)-\mathbb E W_1(\hat\mu,\mu)\ge t-c_{1}n^{-1/d}\right]\nonumber\\
  \le & \exp\left[-c n(t-c_1 n^{-1/d})_+^2\right]\,,\label{eq:concentration_example1}
\end{align}
where $c$ is a constant depending only on $\left\|\|X_i\|\right\|_{\psi_1}$.
The bound given in \eqref{eq:concentration_example1} can be compared with \cite[][Theorem 2, equation (1)]{FournierG15}, which assumes a stronger condition $\left\|\|X_i\|\right\|_{\psi_\alpha}<\infty$ for some $\alpha>1$.  In the interesting regime of $t\asymp n^{-1/d}$ (on the same scale as the optimal upper bound of $\mathbb E W_1(\hat\mu,\mu)$), the result of \cite{FournierG15} only provides a probability bound of order $O(1)$, while \eqref{eq:concentration_example1} provides a bound of $\exp(-c(c_2-c_1)^2n^{(1-2/d)})$ for $t=c_2 n^{-1/d}$ with $c_2>c_1$. 

\paragraph{Functional spaces.} When $\mathcal X=L_2$, the convergence rate of $\mathbb E W_1(\hat\mu,\mu)$ is typically sub-polynomial in $n$, so the polynomial concentration rate given in \Cref{cor:concentration} when $p\in [1,2)$ can be quite meaningful.  To better understand the sub-exponential condition \eqref{eq:subexp} for functional data, we consider the Karhunen-Lo\`{e}ve decomposition \eqref{eq:KL-decomp}.  The sub-exponential condition can be satisfied under standard decay assumptions of the principal score variance $\sigma_m^2$, and corresponding tail bounds on the standardized principal component scores $Z_m$. The following proposition can be obtained after some straightforward applications of triangle inequality and some basic properties of Orlicz $\psi_{\alpha}$ norms.
\begin{proposition}\label{pro:function_orlicz}
  Assuming the K-L decomposition \eqref{eq:KL-decomp}, then
$\left\| \|X_i\| \right\|_{\psi_1} <\infty$ if one of the following holds.
\begin{enumerate}
  \item $\sum_{m=1}^\infty \sigma_m^2 <\infty$, and $\sup_{m}\|Z_m\|_{\psi_2}<\infty$.
  \item $\sum_{m=1}^\infty \sigma_m<\infty$, and $\sup_{m}\|Z_m\|_{\psi_1}<\infty$.
\end{enumerate}
\end{proposition}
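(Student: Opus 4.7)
The plan is to verify the two cases by writing out $\|X\|$ in terms of the Karhunen–Lo\`{e}ve coordinates and applying standard triangle-inequality properties of Orlicz norms, together with the conversion $\|Y^2\|_{\psi_1} = \|Y\|_{\psi_2}^2$ between $\psi_1$ and $\psi_2$ norms. Since $\{\phi_m\}$ is orthonormal, one has the pointwise identity $\|X\|^2 = \sum_m \sigma_m^2 Z_m^2$ and the triangle-inequality bound $\|X\| \le \sum_m \sigma_m |Z_m|$. The two cases exploit these two different representations.

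For Case 2, the plan is the more elementary route: apply triangle inequality in $L_2$ to get $\|X\| \le \sum_m \sigma_m |Z_m|$, then apply the (sub-additivity) triangle inequality of the $\psi_1$ Orlicz norm termwise to obtain
\begin{equation*}
\bigl\| \|X\| \bigr\|_{\psi_1} \;\le\; \sum_{m\ge 1}\sigma_m \bigl\||Z_m|\bigr\|_{\psi_1} \;\le\; \Bigl(\sup_m \|Z_m\|_{\psi_1}\Bigr)\sum_{m\ge 1}\sigma_m \;<\;\infty\,.
\end{equation*}

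For Case 1, the pointwise bound $\|X\| \le \sum_m \sigma_m |Z_m|$ is insufficient because only $\sum \sigma_m^2 < \infty$ is assumed. Instead I would work at the squared level: since $\|X\|^2 = \sum_m \sigma_m^2 Z_m^2$, the triangle inequality of $\|\cdot\|_{\psi_1}$ together with the identity $\|Z_m^2\|_{\psi_1} = \|Z_m\|_{\psi_2}^2$ gives
\begin{equation*}
\bigl\|\|X\|^2\bigr\|_{\psi_1} \;\le\; \sum_{m\ge 1} \sigma_m^2 \|Z_m^2\|_{\psi_1} \;=\; \sum_{m\ge 1}\sigma_m^2 \|Z_m\|_{\psi_2}^2 \;\le\; \Bigl(\sup_m \|Z_m\|_{\psi_2}^2\Bigr)\sum_{m\ge 1}\sigma_m^2 \;<\;\infty\,.
\end{equation*}
Applying $\|Y^2\|_{\psi_1} = \|Y\|_{\psi_2}^2$ once more with $Y = \|X\|$ converts this into $\bigl\|\|X\|\bigr\|_{\psi_2} < \infty$, and the nesting $\|\cdot\|_{\psi_1} \lesssim \|\cdot\|_{\psi_2}$ gives $\bigl\|\|X\|\bigr\|_{\psi_1} < \infty$ as required.

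There is no real obstacle beyond bookkeeping; the only non-obvious step is recognizing that in Case 1 one has to work at the squared level first, because the $\ell^1$-type triangle bound $\|X\| \le \sum_m \sigma_m |Z_m|$ would require $\sum_m \sigma_m < \infty$, which is strictly stronger than $\sum_m \sigma_m^2 < \infty$. The squared-level argument turns the assumption $\sum \sigma_m^2 < \infty$ (which is already implied by $\mathbb{E}\|X\|^2 < \infty$) into the right quantity, and the trade-off is precisely compensated by strengthening $\psi_1$ to $\psi_2$ on the $Z_m$'s. I would briefly record the two Orlicz identities $\|Y^2\|_{\psi_1}=\|Y\|_{\psi_2}^2$ and $\|\cdot\|_{\psi_1}\le c\|\cdot\|_{\psi_2}$ (both standard; see \cite{vdvWellner}) before applying them, so the reader can verify the chain with minimal effort.
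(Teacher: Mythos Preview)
Your proposal is correct and matches the paper's approach: the paper does not give a formal proof but simply states that the result follows from ``straightforward applications of triangle inequality and some basic properties of Orlicz $\psi_\alpha$ norms,'' which is exactly the route you take. Your observation that Case~1 must be handled at the squared level via $\|Y^2\|_{\psi_1}=\|Y\|_{\psi_2}^2$, while Case~2 can proceed directly via $\|X\|\le\sum_m\sigma_m|Z_m|$, fills in precisely the details the paper leaves implicit.
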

Part 1 of \Cref{pro:function_orlicz} covers square-integrable Gaussian processes as an important special case (i.e., $Z_m\sim N(0,1)$ for all $m$).  Part 2 suggests that the tail condition on the PC scores $Z_m$ can be relaxed at the cost of a faster decay of eigenvalues.

\subsection{Concentration by Lipschitz property of $W_p(\hat\mu_n,\mu)$}
One can obtain concentration results for larger values of $p$ by exploiting the Lipschitz property of $W_p(\hat\mu_n,\mu)$ as a function of $(x_1,...,x_n)\in \mathcal X^n$. Similar ideas have been explored in the special case of $\mathcal X=\mathbb R$ (see Section 7.1 of \cite{BobkovL14}, and also \cite{GozlanC10}).

For $x=(x_1,...,x_n)$ and $x'=(x_1',...,x_n')$, define distance $\|x-x'\|^2=\sum_{i=1}^n \|x_i-x_i'\|^2$. Then one has, by triangle inequality,
\begin{align*}
  \left|W_p(\hat\mu,\mu)-W_p(\hat\mu',\mu)\right|\le W_p(\hat\mu,\hat\mu')\le n^{-1/p}\left(\sum_{i=1}^n\|x_i-x_i'\|^p\right)^{1/p}\le n^{-\frac{1}{2\vee p}}\|x-x'\|\,.
\end{align*}
Therefore, $W_p(\hat\mu,\mu)$ is $n^{-\frac{1}{2\vee p}}$-Lipschitz as a function from $\mathcal X^n$ to $\mathbb R$.  Such a Lipschitz property can lead to useful mean-concentration results provided the random vector $X$ has nice tail behavior.  There are many different characterizations of such tail behavior of $X$ \citep{BoucheronLM13}.  Here we use log Sobolev inequality for its presentation simplicity and generality.  Extension to other similar conditions, such as the Poincar\'{e} inequality, shall be straightforward.

\begin{definition}\label{def:LSI}
A probability measure $\mu$ on a metric space $\mathcal X$ is said to satisfy a log Sobolev inequality with constant $C$, if
$$
\mathbb E_\mu \left[f(X)^2 \log f(X)^2 \right]- \mathbb E_\mu\left[ f(X)^2\right] \log \mathbb E_\mu \left[f(X)^2\right]\le 2C \mathbb E_\mu \|\nabla f(X)\|^2 
$$
 for all smooth function $f:\mathcal X\mapsto \mathbb R$ such that the expectations are finite, where $\|\nabla f(x)\|=\sup_{y}\lim_{t\downarrow 0}\frac{|f(x+ty)-f(x)|}{t\|y\|}$.
\end{definition}
The log Sobolev inequality holds for strongly log-concave densities on $\mathcal X=\mathbb R^d$.  If the density of $X$ has the form $e^{-U(x)}$ with $U\succeq I/C$, then it satisfies \Cref{def:LSI} with the same constant $C$. For more details about log Sobolev inequalities, see \cite{Ledoux99}.

Combining the Lipschitz property of $W_p(\hat\mu,\mu)$ and the log Sobolev inequality under product measure \citep[Theorem 5.3 and Corollary 5.7]{Ledoux99}, we have
\begin{theorem}\label{thm:LSI-concentation}
  If the distribution $\mu$ satisfies log Sobolev inequality with constant $C$, then
  $$\mathbb P\left[W_p(\hat\mu,\mu)-\mathbb E W_p(\hat\mu,\mu)\ge t\right]
  \le \exp\left(-\frac{n^{\frac{2}{2\vee p}}t^2}{2C}\right)\,.$$
\end{theorem}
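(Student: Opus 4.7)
The plan is to assemble three ingredients, two of which are essentially already laid out in the paragraph preceding the theorem and cited to \cite{Ledoux99}.

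First, I would invoke the Lipschitz bound derived immediately before the theorem statement: the function $F:\mathcal X^n\to\mathbb R$ defined by $F(x_1,\ldots,x_n)=W_p(\hat\mu,\mu)$, where $\hat\mu=n^{-1}\sum_i\delta_{x_i}$, satisfies
\begin{equation*}
|F(x)-F(x')|\le n^{-\frac{1}{2\vee p}}\|x-x'\|,\qquad \|x-x'\|:=\Bigl(\sum_{i=1}^n\|x_i-x_i'\|^2\Bigr)^{1/2}.
\end{equation*}
Thus $F$ is $L$-Lipschitz with $L=n^{-1/(2\vee p)}$ with respect to the Euclidean product norm on $\mathcal X^n$. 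This is the only step where the specific structure of the Wasserstein distance enters.

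Second, I would invoke the tensorization property of the log Sobolev inequality (Theorem 5.3 of \cite{Ledoux99}): if $\mu$ on $\mathcal X$ satisfies the LSI of \Cref{def:LSI} with constant $C$, then the product measure $\mu^{\otimes n}$ on $\mathcal X^n$ (with the Euclidean product norm and the induced gradient) satisfies the LSI with the same constant $C$. The crucial point is that tensorization preserves the LSI constant, so no factor of $n$ appears from this step.

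Third, I would apply the Herbst argument (Corollary 5.7 of \cite{Ledoux99}): for any $L$-Lipschitz function $F$ on a space whose law satisfies the LSI with constant $C$,
\begin{equation*}
\mathbb P[F-\mathbb E F\ge t]\le \exp\!\left(-\frac{t^2}{2CL^2}\right).
\end{equation*}
Substituting $L^2=n^{-2/(2\vee p)}$ gives precisely the stated bound $\exp\bigl(-n^{2/(2\vee p)}t^2/(2C)\bigr)$.

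The main (essentially only) obstacle is a bookkeeping check that the gradient norm $\|\nabla f(x)\|$ used in \Cref{def:LSI} is compatible with the Lipschitz norm used in the Herbst step, and that the product Banach norm on $\mathcal X^n$ agrees with the Euclidean product distance for which tensorization is stated. Both are standard and handled by the statements in \cite{Ledoux99}. Everything else is an immediate substitution, so the proof is only a few lines once the Lipschitz estimate and the tensorized LSI are on the table.
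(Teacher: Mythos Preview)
Your proposal is correct and is essentially identical to the paper's argument: the paper itself does not give a separate proof but simply states the theorem as an immediate consequence of the Lipschitz bound on $W_p(\hat\mu,\mu)$ together with Theorem~5.3 (tensorization) and Corollary~5.7 (Herbst) of \cite{Ledoux99}, which is precisely the three-step assembly you outline.
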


\Cref{thm:LSI-concentation} gives the same rate as \Cref{cor:concentration} for $p=1$, with strict improvement for $p > 1$ and non-trivial bounds for all values of $p$. In particular, with $\mathcal X=\mathbb R^d$ and a strongly log-concave distribution $\mu$, the mean concentration of $W_p(\hat\mu,\mu)$ has order $n^{-1/p}$, and is dominated by the mean value $\mathbb E W_p(\hat\mu,\mu)$, which, as implied by \Cref{thm:euclidean}, has order $n^{-\frac{1}{(2p)\vee d}}$.

\appendix
\section{Additional background for optimal transport}\label{app:additional}
\begin{definition}[Transport and coupling]
Let $\mu$, $\nu$ be two measures supported on the same measurable set $F\subseteq\mathcal X$ with the same amount of mass $\mu(F)=\nu(F)>0$.
A \emph{coupling} between $\mu$ and $\nu$ is a measure $\xi$ on $F\times F$ such that $\int_{y\in F}\xi(dx,dy)=\mu(dx)$ and $\int_{x\in F}\xi(dx,dy)=\nu(dy)$.

Equivalently, a coupling $\xi$ between $\mu$ and $\nu$ can also be represented by the corresponding transition kernel $K(\cdot,\cdot)$ such that for each $x\in F$, $K(x,\cdot)$ is a measure on $F$, and $\int_{x\in F}K(x,\cdot)\mu(dx)=\nu$.  We call such a transition kernel a \emph{transport} from $\mu$ to $\nu$.
\end{definition}

The following lemma is due to \cite{DereichSS13}, which is the building block for the construction of optimal transport from $\mu$ to $\hat\mu$.
\begin{lemma}\label{lem:DSS_trans}
  Let $\mu$, $\nu$ be two measures with same mass on their common support $F$. Let $C_1,...,C_k$ be a partition of $F$ such that $$\nu|_{C_i}=\frac{\nu(C_i)}{\mu(C_i)}\mu|_{C_i}\,,~~\forall~1\le i\le k\,.$$
Then there exists a coupling $\xi$ between $\mu$ and $\nu$ such that 
$$
\xi(\{(x,y)\in F^2:x\neq y\})=\frac{1}{2}\sum_{i=1}^k\left|\nu(C_i)-\mu(C_i)\right|\,.
$$
\end{lemma}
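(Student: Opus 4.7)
The plan is to explicitly construct a coupling that is ``as diagonal as possible,'' using the hypothesis that $\mu|_{C_i}$ and $\nu|_{C_i}$ are proportional within each cell. Within each $C_i$, put as much mass as you can on the diagonal; then match the leftover mass, which necessarily sits in disjoint cells for $\mu$ and $\nu$, by an arbitrary off-diagonal coupling. The off-diagonal mass will then come out to exactly $\frac{1}{2}\sum_i|\nu(C_i)-\mu(C_i)|$.

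More concretely: for each $i$, let $s_i=\mu(C_i)\wedge \nu(C_i)$ and define
\begin{equation*}
\lambda_i = \frac{s_i}{\mu(C_i)}\,\mu|_{C_i}= \frac{s_i}{\nu(C_i)}\,\nu|_{C_i},
\end{equation*}
where the two expressions agree by the hypothesized proportionality $\nu|_{C_i}=(\nu(C_i)/\mu(C_i))\mu|_{C_i}$ (interpreting $0/0$ as $0$ and discarding trivial cells). Let $\xi_i^{\mathrm{diag}}$ be the pushforward of $\lambda_i$ under $x\mapsto (x,x)$, so $\xi_i^{\mathrm{diag}}$ is supported on the diagonal of $C_i\times C_i$. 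Next define the residual measures
\begin{equation*}
\mu^{\mathrm{res}}=\mu-\sum_{i=1}^k \tfrac{s_i}{\mu(C_i)}\,\mu|_{C_i},\qquad \nu^{\mathrm{res}}=\nu-\sum_{i=1}^k \tfrac{s_i}{\nu(C_i)}\,\nu|_{C_i}.
\end{equation*}
On $C_i$, $\mu^{\mathrm{res}}$ has mass $(\mu(C_i)-\nu(C_i))_+$ and $\nu^{\mathrm{res}}$ has mass $(\nu(C_i)-\mu(C_i))_+$, so they are supported on disjoint unions of cells. Their total masses both equal $T:=\tfrac12\sum_i|\mu(C_i)-\nu(C_i)|$ because $\mu(F)=\nu(F)$. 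If $T=0$, the identity coupling $\xi=\sum_i\xi_i^{\mathrm{diag}}$ already works; otherwise, set $\xi^{\mathrm{off}}=T^{-1}\,\mu^{\mathrm{res}}\otimes\nu^{\mathrm{res}}$.

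Finally, I would take $\xi=\sum_{i=1}^k \xi_i^{\mathrm{diag}}+\xi^{\mathrm{off}}$ and verify the two requirements. For the marginals, the first marginal is $\sum_i (s_i/\mu(C_i))\mu|_{C_i}+\mu^{\mathrm{res}}=\mu$, and symmetrically for $\nu$. For the off-diagonal mass, each $\xi_i^{\mathrm{diag}}$ contributes $0$; and because $\mathrm{supp}(\mu^{\mathrm{res}})$ and $\mathrm{supp}(\nu^{\mathrm{res}})$ lie in disjoint unions of the $C_i$'s, the product measure $\mu^{\mathrm{res}}\otimes\nu^{\mathrm{res}}$ is supported entirely off the diagonal, whence $\xi^{\mathrm{off}}(\{x\neq y\})=T$. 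There is no real obstacle here: the only step that uses the hypothesis in a non-trivial way is the equality of the two expressions for $\lambda_i$, which is exactly where the proportionality assumption is essential (without it, one could not have a diagonal coupling whose marginals are both proportional to $\mu|_{C_i}$ and $\nu|_{C_i}$ simultaneously).
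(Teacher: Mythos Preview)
Your construction is correct and is essentially identical to the paper's: under the proportionality hypothesis, your $\sum_i\lambda_i$ coincides with $\mu\wedge\nu$ and your residuals $\mu^{\mathrm{res}},\nu^{\mathrm{res}}$ coincide with $(\mu-\nu)_+,(\nu-\mu)_+$, so your $\xi$ is exactly the coupling $\xi_1+\delta^{-1}\xi_2$ the paper writes down. You have simply unpacked the verification that the paper leaves to the reader.
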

\begin{proof}
  Let $\delta=\frac{1}{2}\sum_{i=1}^k |\mu(C_i)-\nu(C_i)|$, and define
  \begin{align*}
    \xi=\xi_1+\delta^{-1}\xi_2
  \end{align*}
  where $\xi_1$ is the measure induced from the univariate measure $\mu\wedge \nu$ after the mapping $x\mapsto (x,x)$, and $\xi_2$ is the product measure
  $(\mu-\nu)_+\times (\nu-\mu)_+$. The claim can be directly verified.
\end{proof}

\section{Proofs for the general bound}\label{app:proof_gen}
In this section we provide proofs for \Cref{lem:bounded}, \Cref{lem:telescope}, and \Cref{thm:general}.  \Cref{lem:bounded} has been proved in \cite{WeedB17}, here we present an alternative proof based on the Markov chain method in \cite{DereichSS13}.  The proofs of \Cref{lem:telescope} and \Cref{thm:general} combines the proof ideas in \cite{FournierG15} with the general and finite partition approach.

\begin{proof}[Proof of \Cref{lem:bounded}]
Let $\mu_0=\mu$. For $\ell\ge 0$, define measure $\mu_{\ell+1}$ as
\begin{align*}
  \mu_{\ell+1} =\sum_{F\in\mathcal A_{\ell+1}}  \frac{\nu(F)}{\mu_\ell(F)}\mu_\ell\big|_F\,.
\end{align*}
Again we use the convention $0/0=0$.

\textbf{Facts.}\\
\textit{
1. For all $F\in\mathcal A_\ell$ we have $\mu_{\ell+1}(F)=\mu_\ell(F)=\nu(F)$.\\
2. $\mu_{\ell+1}\big|_C= \frac{\nu(C)}{\mu(C)}\mu\big|_C$, for  $C\in \mathcal A_{\ell+1}$.\\
3. $W_{p}^p(\mu_\ell, \nu)\le 2^p3^{-p\ell}$.
}

Facts 1-2 are straightforward to verify. To see the third one, use the fact that $\mu_\ell(F)=\nu(F)$ for all $F\in\mathcal A_\ell$, one can construct a transport from $\mu_\ell|_F$ to $\nu|_F$ using a product measure, with cost no more than $\nu(F)[{\rm diam}(F)]^p$.  The desired claim follows by summing over all $F\in\mathcal A_\ell$.

Next we need a low-cost transport from $\mu_\ell$ to $\mu_{\ell+1}$. 
Due to facts 1 and 2 above, for each $F\in\mathcal A_\ell$, \Cref{lem:DSS_trans} ensures
the existence of a coupling $\xi_{F}$ between $\mu_\ell|_F$ and $\mu_{\ell+1}|_F$ such that
\begin{align*}
\xi_F(\{(x,y)\in F^2:x\neq y\})\le & \frac{1}{2}\sum_{C\in\mathcal A_{\ell+1}, C\subseteq F}|\mu_{\ell+1}(C)-\mu_\ell(C)|\\
= &\frac{1}{2}\sum_{C\in\mathcal A_{\ell+1}, C\subseteq F}\left|\nu(C)-\frac{\nu(F)}{\mu(F)}\mu(C)\right|\,.
\end{align*}

Let $$\xi_\ell=\sum_{F\in\mathcal A_\ell}\xi_F\,.$$  We have
\begin{align*}
\xi_\ell\left(\left\{(x,y):x\neq y\right\}\right)=\frac{1}{2}\sum_{F\in \mathcal A_\ell} \sum_{C\in F\cap\mathcal A_{\ell+1}}\left|\nu(C)-\nu(F)\frac{\mu(C)}{\mu(F)}\right|
\end{align*}

 Let $Z_0,...,Z_{\ell^*}$ be a Markov chain with $Z_0\sim \mu$ and $(Z_{\ell+1}\mid Z_{\ell}=z)\sim K_\ell(z,\cdot)$, where $K_\ell$ is the transition kernel corresponding to $\xi_\ell$.  By construction, $K_\ell$ only moves points inside each $F\in \mathcal A_\ell$ so $\|Z_\ell-Z_{\ell'}\|\le 2\times 3^{-\ell}$ for all $\ell'>\ell$.
 
Let $L=\inf\{0\le\ell\le \ell^*-1:Z_{\ell+1}\neq Z_\ell\}$. If $Z_0=...=Z_{\ell^*}$ then define $L=\infty$. By construction we have
\begin{align*}
  Z_0=...=Z_L\,,~~~
  \|Z_L-Z_\ell\|\le 3^{-L},~~\forall~\ell>L\,.
\end{align*}

Then,
\begin{align*}
  &\mathbb E\|Z_0-Z_{\ell^*}\|^p \le  \mathbb E\left[\mathbf 1(L< \ell^*) \|Z_L-Z_{\ell^*}\|^p\right]\le 2^p\mathbb E 3^{-pL}\\
  =&2^p\sum_{\ell= 0}^{\ell^*-1}3^{-p\ell} \mathbb P(L=\ell)\le
  2^p\sum_{\ell= 0}^{\ell^*-1}3^{-p\ell} \mathbb P(Z_\ell\neq Z_{\ell+1})\\
  =&2^{p-1}\sum_{\ell= 0}^{\ell^*-1} 3^{-p\ell} \sum_{F\in\mathcal A_\ell} \sum_{C\in \mathcal A_{\ell+1}, C\subseteq F}\left|\nu(C)-\nu(F)\frac{\mu(C)}{\mu(F)}\right|\\
    =&2^{p-1}\sum_{\ell= 0}^{\ell^*-1} 3^{-p\ell} \sum_{F\in\mathcal A_\ell} \sum_{C\in \mathcal A_{\ell+1}, C\subseteq F}\left|\nu(C)-\mu(C)+\mu(F)\frac{\mu(C)}{\mu(F)}-\nu(F)\frac{\mu(C)}{\mu(F)}\right|\\
  \le&  2^{p-1}\sum_{\ell= 0}^{\ell^*-1} 3^{-p\ell} \sum_{F\in\mathcal A_\ell} \sum_{C\in \mathcal A_{\ell+1}, C\subseteq F}\left[\left|\nu(C)-\mu(C)\right|+\frac{\mu(C)}{\mu(F)}\left|\nu(F)-\mu(F)\right|\right]\\
  =&2^{p-1}\sum_{\ell= 0}^{\ell^*-1} 3^{-p\ell}\left[
  \sum_{C\in \mathcal A_{\ell+1}}|\nu(C)-\mu(C)|+\sum_{F\in\mathcal A_\ell}|\nu(F)-\mu(F)|
  \right]\\
  \le &(1+3^p)2^{p-1}\sum_{\ell= 1}^{\ell^*} 3^{-p\ell}
  \sum_{F\in \mathcal A_{\ell}}|\nu(F)-\mu(F)|\,.
\end{align*}

Let $(Z_{\ell^*},Z)$ be distributed as the optimal coupling between $\mu_{\ell^*}$ and $\nu$. Then combining the last inequality and Fact 3 above we obtain
\begin{align*}
  W_p^p(\mu,\nu)\le & \mathbb E\|Z_0-Z\|^p\le 2^{p-1}\left( \mathbb E\|Z_0-Z_{\ell^*}\|^p+\mathbb E\|Z_{\ell^*}-Z\|^p\right)\\
  \le & c_p\left[3^{-p\ell^*}+\sum_{\ell= 1}^{\ell^*} 3^{-p\ell}
  \sum_{F\in \mathcal A_{\ell}}|\nu(F)-\mu(F)|\right]\,. \qedhere
\end{align*}
\end{proof}

\begin{proof}[Proof of \Cref{lem:telescope}]
  The moment condition ensures that $\mu$ and $\nu$ are supported on $\bigcup_{j\ge 0}B_j$.  The remaining of the proof mainly mimics that of \cite{FournierG15}.
  %


  Let $\tilde\xi_j$ be the optimal coupling between $\tilde\mu_j$ and $\tilde\nu_j$, and $\xi_j$ the image of $\tilde \xi_j$ after the map $(x,y)\mapsto 2^j (x,y)$.

  Let $\eta=\frac{1}{2}\sum_{j\ge 0}|\mu(B_j)-\nu(B_j)|$ and define
  $$
  \xi = \sum_{j\ge 0}(\mu(B_j)\wedge \nu(B_j))\xi_j + \eta^{-1} \alpha\times \beta
  $$
  where
  \begin{align*}
    \alpha=\sum_{j\ge 0}(\mu(B_j)-\nu(B_j))_+ \frac{\mu|_{B_j}}{\mu(B_j)}\,,~~
    \beta=\sum_{j\ge 0}(\nu(B_j)-\mu(B_j))_+ \frac{\nu|_{B_j}}{\nu(B_j)}\,.
  \end{align*}
  Then $\xi$ is a coupling between $\mu$ and $\nu$.  It also holds that $\alpha(\mathcal X)+\beta(\mathcal X)=\eta$.

  Now
  \begin{align*}
    &\int \int \|x-y\|^p \eta^{-1}\alpha(dx)\beta(dy)\\
    \le & 2^{p-1}\int\int (\|x\|^p+\|y\|^p) \eta^{-1}\alpha(dx)\beta(dy)\\
    =&2^{p-1}\eta^{-1}\left[\int \|x\|^p \alpha(dx) \int \beta(dy) +
    \int \alpha(dx)\int \|y\|^p\beta(dy)\right]\\
    \le & 2^{p-1}\sum_{j\ge 0}2^{jp}\left[(\mu(B_j)-\nu(B_j))_+ + (\nu(B_j)-\mu(B_j))_+\right]\\
    = & 2^{p-1} \sum_{j\ge 0}2^{pj}|\mu(B_j)-\nu(B_j)|.
  \end{align*}
  Then
  \begin{align}
    W_p^p(\mu,\nu)\le & \int\int\|x-y\|^p\xi(dx,dy)\nonumber\\
    =&\sum_{j\ge 0}(\mu(B_j)\wedge\nu(B_j)) \int\int \|x-y\|^p\xi_j(dx,dy)+
    \int\int\|x-y\|^p \eta^{-1}\alpha(dx)\beta(dy)\nonumber\\
    \le&\sum_{j\ge0}2^{jp} (\mu(B_j)\wedge\nu(B_j)) W_p^p(\tilde\mu_j,\tilde\nu_j)+2^{p-1}\sum_{j\ge 0}2^{pj}|\mu(B_j)-\nu(B_j)|\nonumber\\
    \le&\sum_{j\ge 0}2^{pj}\left[(\mu(B_j)\wedge\nu(B_j)) W_p^p(\tilde\mu_j,\tilde\nu_j)+2^{p-1}|\mu(B_j)-\nu(B_j)|\right]\,,\label{eq:tele1}
  \end{align}  
which concludes the proof.
\end{proof}


\begin{proof}[Proof of \Cref{thm:general}]
  Without loss of generality we assume $\|\rho(X)\|_q=1$, since otherwise we can always consider $X/\|\rho(X)\|_q$.

With $\nu=\hat\mu$, the plan is to use \Cref{lem:bounded} to control the term $W_p^p(\tilde\mu_j,\tilde\nu_j)$ in \Cref{lem:telescope}.  The fact that $\nu=\hat\mu$ and the moment condition imply the conditions required by \Cref{lem:telescope}.
First observe that
  \begin{align*}
  & (\mu(B_j)\wedge\nu(B_j)) \sum_{\ell=1}^{\ell^*}3^{-p\ell}\sum_{F\in\mathcal A_\ell}|\tilde\mu(F)-\tilde\nu(F)|\\
  \le&\mu(B_j)\sum_{\ell=1}^{\ell^*}3^{-p\ell}\sum_{F\in\mathcal A_\ell}|\tilde\mu(F)-\tilde\nu(F)|\\
  \le & \mu(B_j)\sum_{\ell=1}^{\ell^*}3^{-p\ell}\sum_{F\in\mathcal A_\ell}\left|\frac{\mu((2^jF)\cap B_j)}{\mu(B_j)}-\frac{\nu((2^jF)\cap B_j)}{\nu(B_j)}\right|\\
  \le & \sum_{\ell=1}^{\ell^*}3^{-p\ell}\sum_{F\in\mathcal A_\ell}\left[|\mu((2^jF)\cap B_j)-\nu((2^j F)\cap B_j)|+\left|1-\frac{\mu(B_j)}{\nu(B_j)}\right|\nu((2^j F)\cap B_j)\right]\\
  \le &\sum_{\ell=1}^{\ell^*}3^{-p\ell}\sum_{F\in\mathcal A_\ell}|\mu((2^jF)\cap B_j)-\nu((2^j F)\cap B_j)| + \frac{3^{-p}}{1-3^{-p}}|\mu(B_j)-\nu(B_j)|\,.
  \end{align*}

Now applying \Cref{lem:bounded} to each of $W_p^p(\tilde\mu_j,\tilde\nu_j)$, combining with the last inequality, and plugging in \eqref{eq:tele1}, we get
  \begin{align}
    W_p^p(\mu,\nu)\le & c_p \sum_{j\ge 0}2^{pj}\Bigg[
    (\mu(B_j)\wedge\nu(B_j))3^{-p\ell^*}+|\mu(B_j)-\nu(B_j)|\nonumber\\
    &\quad+\sum_{\ell=1}^{\ell^*}3^{-p\ell}\sum_{F\in\mathcal A_\ell}|\mu((2^jF)\cap B_j)-\nu((2^j F)\cap B_j)|+|\mu(B_j)-\nu(B_j)|
    \Bigg]\nonumber\\
    \le& c_p\sum_{j\ge 0}2^{pj}\Bigg[\mu(B_j)3^{-p\ell^*}
    +\sum_{\ell=0}^{\ell^*}3^{-p\ell}\sum_{F\in\mathcal A_\ell}|\mu((2^jF)\cap B_j)-\nu((2^j F)\cap B_j)|\Bigg]\,.\label{eq:tele2}
  \end{align}

By the moment condition, Markov's inequality implies that
  $$
  \mu(B_j)\le 2^{-q(j-1)}\,.
  $$
 Because $n \hat\mu(A)$ is a binomial random variable with parameters $\mu(A)$ and $n$, we  have $\mathbb E|\hat\mu(A)-\mu(A)|\le (2\mu(A))\wedge\sqrt{\mu(A)/n}$ for all $A$. 
Therefore,
  \begin{align*}
  &\sum_{F\in\mathcal A_\ell} \mathbb E|\hat\mu((2^jF)\cap B_j)-\mu((2^jF)\cap B_j)|\\
  \le & \sum_{F\in\mathcal A_\ell} [2\mu((2^jF)\cap B_j)] \wedge \sqrt{\mu((2^jF)\cap B_j)/n}\\ \le & [2\mu(B_j)]  \wedge \left[ \sqrt{\frac{{\rm card}(\mathcal A_{\ell})\mu(B_j)}{n}}\right]\,,
  \end{align*}
  where the last step uses Cauchy-Schwartz.

Now the proof is complete if we can show that there exists a sequence of nested partitions $\mathcal A_1,...,\mathcal A_{\ell^*}$ such that $\sup_{F\in\mathcal A}{\rm diam}(F)\le 2\times 3^{-\ell}$, and ${\rm card}(\mathcal A_{\ell})\le \bar N_\ell=N_{3^{-(\ell+1)}}(B_0)$.  Proposition 3 of \cite{WeedB17} guarantees the existence of such a sequence of partitions.
\end{proof}

\section{Proofs for Euclidean and functional spaces}\label{app:special_cases}
\begin{proof}[Proof of \Cref{thm:euclidean}]
Recall that now $B_0$ is the unit ball in $\mathbb R^d$.  Use the result of \cite{Verger-Gaugry05}, we have, for some constant $c$
  $$
  \bar N_\ell = N_{3^{-(\ell+1)}}(B_0)\le 3^{(\ell+c)d}
  $$
Then \Cref{thm:general} implies
  \begin{align*}
   W_p^p(\hat\mu_n,\mu)
      \le & c_p\sum_{j\ge 0}2^{pj}\Bigg\{2^{-qj}3^{-p\ell^*}+\sum_{\ell=0}^{\ell^*}3^{-p\ell} \left[2^{-qj}  \wedge  \left(3^{d(\ell+c)/2}2^{-qj/2}n^{-1/2}\right)\right]\Bigg\}\,.
  \end{align*}
  Now let
  $$
  \ell_j^*=\left\lfloor\frac{\log_2 n-qj}{d\log_2 3}-c\right\rfloor\,,
  $$
  and
   $$\ell^*=\ell_0^*=\left\lfloor\frac{\log_2 n}{d\log_2 3}-c\right\rfloor\,.$$
If $\ell$ is an integer, then
  $$
  2^{-qj}  \ge  3^{d(\ell+c)/2}2^{-qj/2}n^{-1/2}\Leftrightarrow
  \ell \le \ell_j^*\,.
  $$

Let $j_n^*=\sup\{j\in\mathbb Z:\ell_j^*\ge 0\}=\lfloor q^{-1}(\log_2 n-cd\log_2 3) \rfloor$.

If $d\ge (2c)^{-1}\log_3 n$ then $n^{-(1/d)}\ge 3^{-2c}$ is a constant and the claim 
of theorem follows from \Cref{thm:general}, where the right hand side is trivially bounded by a constant depending only on $(p,q)$.

Now we focus on the case $d\ge (2c)^{-1}\log_3 n$, which implies that $j_n^*\ge 0$.

When $j\le j_n^*$, then $\ell_j^*\ge 0$, and (``$\lesssim$'' means up to a constant factor $c_{p,q}$)
  \begin{align*}
    &\sum_{\ell=0}^{\ell^*}3^{-p\ell} \left[2^{-qj}  \wedge  \left(3^{d(\ell+c)/2}2^{-qj/2}n^{-1/2}\right)\right]\\
   \le& \sum_{\ell=0}^{\ell_j^*}3^{-p\ell} 3^{d(\ell+c)/2}2^{-qj/2}n^{-1/2} +\sum_{\ell=\ell_j^*+1}^{\ell^*}3^{-p\ell} 2^{-qj}\\
   \lesssim & 2^{-qj/2}n^{-1/2}3^{cd/2}\sum_{\ell=0}^{\ell_j^*}3^{-(p-d/2)\ell}+
   2^{-qj}(2^{-qj}n)^{-p/d}\,.
  \end{align*}
  When $j> j_n^*$, then $\ell_j^*<0$, and
  \begin{align*}
    \sum_{\ell=0}^{\ell^*}3^{-p\ell} \left[2^{-qj}  \wedge  \left(3^{d(\ell+c)/2}2^{-qj/2}n^{-1/2}\right)\right] \le \sum_{\ell=0}^{\ell^*}3^{-p\ell} 2^{-qj}\lesssim 2^{-qj}\,.
  \end{align*}
  Thus $\sum_{j>j_n^*}2^{pj}2^{-qj}\lesssim 2^{-qj_n^*(1-p/q)}\lesssim (n3^{-cd})^{-(1-p/q)}$ and
  \begin{align}
  &\sum_{j\ge 0} 2^{pj}  \sum_{\ell=0}^{\ell^*}3^{-p\ell} \left[2^{-qj}  \wedge  \left(3^{d(\ell+c)/2}2^{-qj/2}n^{-1/2}\right)\right]\nonumber\\
  \lesssim & \sum_{j=0}^{j_n^*}2^{pj}\Bigg\{2^{-qj/2}n^{-1/2}3^{cd/2}\sum_{\ell=0}^{\ell_j^*}3^{-(p-d/2)\ell}+
   2^{-qj}(2^{-qj}n)^{-p/d}\Bigg\}+(n3^{-cd})^{-(1-p/q)}\,.\label{eq:euclid0}
  \end{align}

  Case 1: $p>d/2$.  The terms in the sum $\sum_{\ell=0}^{\ell_j^*}3^{-(p-d/2)\ell}$ is geometrically decreasing and hence the sum is bounded by the first term by a constant factor. Moreover, for $j\le j_n^*$ we have $(2^{-qj}n)^{-p/d} \le c_{p,q}(2^{-qj}n)^{-1/2}$. So \eqref{eq:euclid0} is bounded by (ignoring constant factors)
  \begin{equation}
   \sum_{j=0}^{j_n^*} 2^{pj}2^{-qj/2}n^{-1/2}+n^{-(1-p/q)}\,.\label{eq:euclid1}
  \end{equation}
  Case 1.1: $p< q/2$. Both terms in \eqref{eq:euclid1} are bounded by $n^{-1/2}$.

  Case 1.2: $p=q/2$.  The first term in \eqref{eq:euclid1} equals $(1+j_n^*) n^{-1/2}\le c_{p,q} (\log n) n^{-1/2}$, and the second term in \eqref{eq:euclid1} equals $n^{-1/2}$.

  Case 1.3: $p\in (q/2,q)$.  The first sum in \eqref{eq:euclid1} equals $n^{-1/2}2^{(p-q/2)j_n^*}\le c_{p,q}n^{-(1-p/q)}$, the same as the second term.

  Case 2: $p=d/2$.  Now \eqref{eq:euclid0} reduces to (ignoring constant factors)
  \begin{equation}\label{eq:euclid2}
 \sum_{j=0}^{j_n^*}2^{pj} 2^{-qj/2}n^{-1/2}(\ell_j^*+1)+n^{-(1-p/q)}\,.
  \end{equation}
  Case 2.1: $p<q/2$.   The first term is a geometric sum bounded by $n^{-1/2}\log n$, which dominates the second term.

  Case 2.2: $p=q/2$.  The first term is of order $n^{-1/2}(\log n)^2$, which dominates the second term.

  Case 2.3: $p>q/2$.  
  The first term becomes
  $n^{-1/2}  \sum_{j=0}^{j_n^*} 2^{(p-q/2)j} (\ell_j^*+1)$.
Let $\tilde j_n^*=q^{-1}(\log_2 n-cd\log_2 3)$, and $a=2^{p-q/2}$
\begin{align}
  \sum_{j=0}^{j_n^*} 2^{(p-q/2)j} (\ell_j^*+1) \le & 
 \sum_{j=0}^{j_n^*} 2^{(p-q/2)j} \left(\frac{\log_2 n -qj}{d\log_2 3}-c+1\right)\nonumber\\
 =&\frac{q}{d\log_2 3}\sum_{j=0}^{j_n^*} a^j(\tilde j_n^*-j+c_{p,q})\nonumber\\
 \le & c_{p,q}\sum_{j=0}^{j_n^*} a^j(j_n^*-j)+ c_{p,q}a^{j_n^*}\,.\label{eq:euclid2.3}
\end{align}
Standard calculation shows that 
$$\sum_{j=0}^{j_n^*} a^j(j_n^*-j)=\frac{a}{(a-1)^2}(a^{j_n^*}-1)-\frac{j_n^*}{a-1}\le c_{p,q}a^{j_n^*}\,.$$
Thus \eqref{eq:euclid2.3} is bounded by
$$
c_{p,q}2^{(p-q/2)j_n^*}\le c_{p,q} n^{p/q-1/2}
$$
and \eqref{eq:euclid2} is bounded by $c_{p,q}n^{-(1-p/q)}$\,.

Case 3: $p<d/2$. In this case $3^{-(p-d/2)\ell}$ is an increasing geometric sequence as $\ell$ changes from $0$ to $\ell_j^*$. Thus \eqref{eq:euclid0} becomes, using the fact that $3^{\ell_j^*}\le 3^{-c}(n2^{-qj})^{1/d}$,
\begin{align}
&\sum_{j=0}^{j_n^*}2^{pj}\left\{2^{-qj/2}n^{-1/2}3^{cd/2}3^{(d/2-p)\ell_j^*}+2^{-qj}(2^{-qj}n)^{-p/d}\right\}+(n3^{-cd})^{-(1-p/q)}\nonumber\\
\le&\sum_{j=0}^{j_n^*}2^{-jq(1-p/d-p/q)}n^{-p/d}+(n3^{-cd})^{-(1-p/q)}\,.\label{eq:euclid3}
\end{align}
  Case 3.1: $d< \frac{qp}{q-p}$ (or $1-p/d-p/q<0$). The first term is the sum of an increasing geometric sequence, which is bounded by the last term $n^{-p/d}2^{j_n^*q(p/d+p/q-1)}\le c_{p,q}n^{-(1-p/q)}$, matching the second term.

  Case 3.2: $d= \frac{qp}{q-p}$ (or $1-p/d-p/q=0$).  The first term is of order $n^{-p/d}\log n$ and dominates the second term.

  Case 3.3: $d> \frac{qp}{q-p}$ (or $1-p/d-p/q>0$). The first term is of order $n^{-p/d}$.  Now if $d\le 2qp/(q-p)$ then the second term is bounded by $n^{-(1-p/q)}$ and hence dominated by the first term.  If $d> 2qp/(q-p)$ then by assumption that $d\le (2c)^{-1}\log_3 n$ we have $3^{-cd}\ge n^{-1/2}$ so the second term is bounded by $n^{-(1-p/q)/2}$ which is dominated by the first term.

Finally, note that the contribution from $\sum_{j\ge 0}2^{-pj}2^{-qj}3^{-p\ell^*}$ is bounded by $n^{-p/d}$, which is dominated in all the nine cases.
\end{proof}

\begin{proof}[Proof of \Cref{thm:poly}]
Again it suffices to prove for the case $M_q=1$.  In our proof ``$\lesssim$'' means inequality holds up to a constant depending only on $(p,q,b)$.
  
According to Corollary 2.4 of \cite{LuschgyP04}, we have, as $\epsilon\rightarrow 0$,
  $$
  \log N_\epsilon(B_0)=(1+o(1)) b \epsilon^{-\frac{1}{b}}\,.
  $$
As a consequence we have, for some positive constant $c_b$ fixed through this proof, and all $\ell\in \mathbb N$,
$$
\bar N_\ell\le 2^{c_b 3^{\ell/b}}\,.
$$

Again, in order to use \Cref{thm:general}, the key is to break down the term
$$
2^{-qj}  \wedge  \left(\bar N_\ell^{1/2}2^{-qj/2}n^{-1/2}\right)\,.
$$

  Let $$\ell_j^*=\left\lfloor b\left[\log_3(\log_2 n-qj)-\log_3 c_b\right]\right\rfloor$$
  and $\ell^*=\ell_0^*=\left\lfloor b(\log_3 \log_2 n-\log_3 c_b)\right\rfloor$ so that
  $\ell\le \ell_j^*\Rightarrow 2^{-qj}\ge \bar N_\ell^{1/2}2^{-qj/2}n^{-1/2}$.

  Let $j_n^*=\lfloor q^{-1}(\log_2 n-c_b)\rfloor$ so that $j\le j_n\Leftrightarrow \ell_j^*\ge 0$.

If $\log_2 n< c_b$ then $n< 2^{c_b}=O(1)$.  The claim of the theorem follows trivially from \Cref{thm:general}. Now we focus on the case $\log_2 n \ge c_b$ so that $j_n^*\ge 0$.

  So when $j\le j_n^*$,
  \begin{align}
    &\sum_{\ell=0}^{\ell^*}3^{-p\ell} \left[2^{-qj}  \wedge  \left(\bar N_\ell^{1/2}2^{-qj/2}n^{-1/2}\right)\right]\nonumber\\
  =&2^{-qj/2}n^{-1/2}\sum_{\ell=0}^{\ell_j^*} 3^{-p\ell} 2^{( c_b/2)3^{\ell/b}}  + 2^{-qj}\sum_{\ell=\ell_j^*+1}^{\ell^*}3^{-p\ell}\nonumber\\
  \lesssim & 2^{-qj/2}n^{-1/2}+ 2^{-qj}3^{-p\ell_j^*}\nonumber\\
  \lesssim & 2^{-qj/2}n^{-1/2}+2^{-qj}(\log_2 n - qj)^{-bp}\,.\nonumber
  \end{align}
To see the first inequality, observe that the terms $3^{-p\ell}2^{(c_b/2)3^{\ell/b}}$ becomes super-geometrically increasing with rate at least $2$ after a constant number of terms. Thus the sum is bounded by a constant plus the last term, which is $3^{-p\ell_j^*}2^{(c_b/2)3^{\ell_j^*/b}}\le 3^{-p\ell_j^*}n^{1/2}2^{-qj/2}$.

When $j> j_n^*$, we have $\ell_j^*< 0$ and hence
  \begin{align*}
    \sum_{\ell=0}^{\ell^*}3^{-p\ell} \left[2^{-qj}  \wedge  \left(\bar N_\ell^{1/2}2^{-qj/2}n^{-1/2}\right)\right]
  \lesssim & 2^{-qj}\,.
  \end{align*}

Combining the above two inequalities with \Cref{thm:general} we obtain
  \begin{align}
  &\mathbb E W_p^p(\hat\mu,\mu)\nonumber\\
      \lesssim & 3^{-p\ell^*}+\sum_{j=0}^{j_n^*}2^{pj}\left[2^{-qj/2}n^{-1/2}+ 2^{-qj}(\log_2 n - qj)^{-bp}\right] +\sum_{j>j_n^*}2^{pj}2^{-qj}\nonumber\\
      \lesssim & \sum_{j=0}^{j_n^*} 2^{-(q-p)j}(\log_2 n-qj)^{-bp}+
      n^{-1/2}\sum_{j=0}^{j_n^*} 2^{-(q/2-p)j}+(\log n)^{-bp}+n^{-(1-p/q)}\,,\label{eq:poly1}
  \end{align}
  where the $(\log n)^{-bp}$ upper bounds $3^{-p\ell^*}$, and $n^{-(1-p/q)}$ upper bounds $\sum_{j>j_n^*}2^{pj}2^{-qj}$.
  
Now we control the first two terms in \eqref{eq:poly1}.
For the first term, if we pick $c_b$ large enough so that $c_b\ge \frac{q2^{(q/p-1)/(2b)}}{2^{(q/p-1)/(2b)}-1}$, then for all $0\le j\le j_n^*$ the terms $2^{-(q-p)j}(\log_2 n -qj)^{-bp}$
in the first sum decreases super-geometrically with rate $2^{-(q-p)/2}$ and hence bounded by the first term, which is of order $(\log n)^{-pb}$. 

For the second term, we consider three cases.

Case 1: $p<q/2$, then the sequence $2^{-(q/2-p)j}$ is geometrically decreasing and the sum is bounded by the first term which is $1$. So the second part in \eqref{eq:poly1} is bounded by $n^{-1/2}$.

Case 2: $p=q/2$, the sum becomes $n^{-1/2}(1+j_n^*)\lesssim n^{-1/2}\log n$.

Case 3: $p>q/2$, the sum is dominated by the last term, which is
$n^{-1/2}2^{(p-q/2)j_n^*}\lesssim n^{-1/2}n^{p/q-1/2}=n^{-(1-p/q)}$.

In all these cases, \eqref{eq:poly1} is dominated by the first term and hence
  \begin{align*}
    \mathbb E W_p^p(\hat\mu,\mu)\lesssim (\log n)^{-bp}\,,
  \end{align*}
which implies the desired upper bound using $p\ge 1$ and Jensen's inequality.

Now we prove the lower bound.
Let $\epsilon_n=(\frac{2}{b}\log n)^{-b}$. By Corollary 2.4 of \cite{LuschgyP04} we have
$$
\lim_{n\rightarrow\infty}\frac{\log N_{\epsilon_n}(B_0)}{2\log n}= 1\,.
$$
Therefore, there exists a constant $n_0\in\mathbb N$ such that $N_{\epsilon_n}\ge n$ for all $n\ge n_0$.

For $n< n_0$, the result is trivial because the desired lower bound is essentially a constant and one can pick $\mu_n$ to be a distribution with two point masses that are a constant distance apart.

For $n\ge n_0$, let $x_1,...,x_n$ be points in $B_0$ such that $\min_{1\le i<j\le n}\|x_i-x_j\|\ge \epsilon_n$.  Let $\mu_n$ be the measure that puts $n^{-1}$ mass at each $x_i$ ($1\le i\le n$).

If $\hat\mu_n(\{x_i\})=0$ for some $x_i$, then any transport between $\hat\mu_n$ and $\mu_n$ costs probability mass at least $n^{-1}$ and distance at least $\epsilon_n$ to just cover the point $x_i$ alone.  Thus we have the following lower bound
$$
W_p(\hat\mu_n,\mu_n)\ge \left\{\sum_{i=1}^n \epsilon_n^p n^{-1}\mathbf 1\left[\hat\mu_n(\{x_i=0\})\right]\right\}^{1/p}=\epsilon_n \kappa_n^{1/p}\,,
$$
where $\kappa_n$ is the proportion of $x_i$'s such that $\hat\mu_n(\{x_i\})=0$.
As a result
  \begin{align*}
    \mathbb E W_p(\hat\mu_n,\mu_n)\ge \epsilon_n \mathbb E(\kappa_n^{1/p})\,.
  \end{align*}
Elementary calculation shows that $\mathbb E \kappa_n\rightarrow e^{-1}$. Also $\kappa_n$ has bounded difference in the sense that if one out of the $n$ independent sample points is changed arbitrarily, then $\kappa_n$ changes no more than $1/n$. Thus, by McDiarmid's inequality, $\mathbb P(|\kappa_n-\mathbb E\kappa_n|\ge t)\le 2\exp\left(-2nt^2\right)$ for all $t>0$. Now for $n$ larger than some constant $n_1$ we have $\mathbb E\kappa_n\ge (2e)^{-1}$.  Choosing $t=(4e)^{-1}$ we have $\mathbb P(\kappa_n\ge (4e)^{-1})\rightarrow 1$.  Therefore
$\mathbb E(\kappa_n^{1/p})\ge c$ for some universal positive constant $c$.
\end{proof}

\begin{proof}[Proof of \Cref{lem:metric_entropy_exp}]
  For $\epsilon\in(0,1)$, let $J_{1}=\lceil \log_\gamma(\epsilon^{-1}) \rceil$, so that $\tau_m/\epsilon >1 \Leftrightarrow m\le J_{1}$
  Let $B_{1}=\{x\in\mathbb R^{J_{1}}:\sum_{m=1}^{J_1}(x_m/\tau_m)^2\le 1\}$. Then $B_{1}$ is the projection of $B_0$ onto the first $J_{1}$ coordinates.  Thus $N_\epsilon(B_0)\ge N_{\epsilon}(B_{1})$.  But Theorem 1 of \cite{Dumer06} implies that
  \begin{align*}
   \log N_{\epsilon}(B_{1})\ge & \sum_{m=1}^{J_{1}} \log(\tau_m/\epsilon)= J_{1}\log(\epsilon^{-1})-\log\gamma \frac{J_{1}(J_{1}-1)}{2}
  \end{align*}
  and the lower bound claim follows by $\log_\gamma(\epsilon^{-1})\le J_{1}<\log_\gamma(\epsilon^{-1})+1$.

For the upper bound, 
let $\theta\in(0,1/2)$ be a constant. For example, we can pick $\theta=1/3$.
Define $J_{2}=\lceil \log_\gamma (\epsilon\sqrt{1-\theta})^{-1} \rceil$, so that $\tau_m/\epsilon\in (\sqrt{1-\theta},1]\Leftrightarrow m\in(J_{1},J_{2}]$.
Let \begin{align*}
  B_{2}=&\{x\in\mathbb R^{J_{2}}:\sum_{m=1}^{J_2}(x_m/\tau_m)^2\le 1\}\,,\\
  B_3=&\{x\in\mathbb R^\infty:\sum_{m\ge 1}(x_m/\tau_{J_2+m})^2\le 1\}\,.
\end{align*}
Then $B_0\subset B_2\times B_3$.

By construction $\tau_{J_2+1}/\epsilon \le \sqrt{1-\theta}$, so $B_3$ can be covered by the ball centered at $0$ with radius $\sqrt{1-\theta}\epsilon$.  As a consequence,
$$
N_{\sqrt{2-\theta}\epsilon}(B_0) \le N_{\sqrt{2-\theta}\epsilon}(B_2\times B_3)
\le N_\epsilon(B_2)\,.
$$
By Theorem 2 of \cite{Dumer06}, we have ($c_0,c_1$, depending only on $\gamma$, $\theta$, may change from line to line)
\begin{align*}
  \log N_\epsilon(B_2)\le& \sum_{m=1}^{J_1}\log(\tau_m/\epsilon)+J_2\log(3/\theta)\\
  =& \frac{\log\gamma}{2}\lceil\log_\gamma(\epsilon^{-1})\rceil\left(2\log_\gamma(\epsilon^{-1})-\lceil\log_\gamma(\epsilon^{-1})\rceil+1\right)\\
&\quad+\left\lceil\log_\gamma(\epsilon^{-1})+\log_\gamma\frac{1}{\sqrt{1-\theta}}\right\rceil\log(3/\theta)\\
\le & \frac{\log\gamma}{2}(\log_\gamma(\epsilon^{-1})+1)^2+c_1\log_\gamma(\epsilon^{-1})+c_0\\
= & \frac{1}{2\log\gamma}(\log(\epsilon^{-1})+c_1)^2+c_0\,.
\end{align*}
The desired result can be obtained by plugging in $\epsilon=3^{-(\ell+1)}/\sqrt{2-\theta}$.
\end{proof}

\begin{proof}[Proof of \Cref{thm:exp}]
  By \Cref{lem:metric_entropy_exp} we have for $\ell\in\mathbb N$
  $$
  \log \bar N_\ell \le c_\gamma (\ell+c_1)^2 +\log c_0
  $$
  for $c_\gamma=\frac{(\log 3)^2}{2\log\gamma}$ and positive constants $c_1$, $c_0$ depending on $\gamma$ only.
  Then $$\bar N_\ell\le c_0 e^{c_\gamma(\ell+c_1)^2}\,.$$

Let $j_n^*=\left\lfloor q^{-1}\frac{\log(n/c_0)-c_1^2c_\gamma}{\log 2}\right\rfloor$.

If $j_n^*<0$ then $n< c_0e^{c_1^2c_\gamma}$ and the claim follows trivially.
Now we focus on the case $j_n^*\ge 0$.

Let $\ell_j^*=\left\lfloor\left[\frac{(\log(n/c_0)-qj\log 2)_+}{c_\gamma}\right]^{1/2}-c_1\right\rfloor$,
  and $\ell^*=\ell_0^*=\left\lfloor\left[\frac{\log(n/c_0)}{c_\gamma}\right]^{1/2}-c_1\right\rfloor$.

  When $j\le j_n^*$, we have $\ell_j^*\ge 0$ so
  \begin{align*}
    &\sum_{\ell=0}^{\ell^*}3^{-p\ell} \left[2^{-qj}  \wedge  \left( \bar N_\ell^{1/2}2^{-qj/2}n^{-1/2}\right)\right]\\
  \lesssim&c_0^{1/2}2^{-qj/2}n^{-1/2}\sum_{\ell=0}^{\ell_j^*} 3^{-p\ell} e^{c_\gamma (\ell+c_1)^2/2}  + 2^{-qj}\sum_{\ell=\ell_j^*+1}^{\ell^*}3^{-p\ell}\\
  \lesssim&2^{-qj/2}n^{-1/2}+ 2^{-qj}3^{-p\sqrt{\frac{\log(n/c_0)-qj\log 2}{c_\gamma}}}\,.
  \end{align*}
To see the last inequality, observe that after a constant number of terms in the first sum in the line above, the terms $3^{-p\ell} e^{c_\gamma (\ell+c_1)^2/2}$ become super-geometrically increasing with rate at least $2$. So the first sum is upper bounded (up to constant factor) by the last term plus $1$.

When $j> j_n^*$, we have $\ell_j^*=0$
  and
  \begin{align*}
    \sum_{\ell=0}^{\ell^*}3^{-p\ell} \left[2^{-qj}  \wedge  \left( \bar N_\ell^{1/2}2^{-qj/2}n^{-1/2}\right)\right]
  \lesssim & 2^{-qj}\,.
  \end{align*}

Combining the above two inequalities with \Cref{thm:general}, we get
  \begin{align}
  &\mathbb E W_p^p(\hat\mu,\mu)\nonumber\\
      \lesssim & 3^{-p\ell^*}+\sum_{j=0}^{j_n^*}2^{pj}2^{-qj/2}n^{-1/2}+\sum_{j=0}^{j_n^*}2^{pj} 2^{-qj}3^{-p\sqrt{\frac{\log(n/c_0) -qj\log 2}{c_\gamma}}} +\sum_{j>j_n^*}2^{pj}2^{-qj}\nonumber\\
      \lesssim & \sum_{j=0}^{j_n^*}2^{pj} 2^{-qj}3^{-p\sqrt{\frac{\log(n/c_0) -qj\log 2}{c_\gamma}}}+\sum_{j=0}^{j_n^*}2^{pj}2^{-qj/2}n^{-1/2}+3^{-p\sqrt{\log n/c_\gamma}}+n^{-(1-p/q)}\,,\label{eq:exp0}
  \end{align}
where the term $3^{-p\sqrt{\log n/c_\gamma}}$ controls $3^{-p\ell^*}$, and $3^{-p\sqrt{\log n/c_\gamma}}+n^{-(1-p/q)}$ controls $\sum_{j>j_n^*}2^{pj}2^{-qj}$.

  If we choose $c_1$ large enough so that $c_1\ge\frac{qp}{q-p}\frac{\log 3}{c_\gamma}$ then it can be verified that the terms in the first sum in \eqref{eq:exp0} is  super-geometrically decreasing with rate $2^{-(q-p)}$. Thus the sum is bounded by the first term
  $3^{-p\sqrt{(\log (n/c_0))/c_\gamma}}\lesssim 3^{-p\sqrt{(\log n)/c_\gamma}}$.

The second sum in \eqref{eq:exp0} can be controlled using the same argument as for the second term in \eqref{eq:poly1} in the proof of \Cref{thm:poly}, which is bounded by $n^{-[(1-p/q)\wedge(1/2)]}(\log n)^{\mathbf 1(q=2p)}$, and is dominated by the first term in \eqref{eq:exp0}.

  Therefore the first and third terms dominate in \eqref{eq:exp0} and final rate is
  $$
  \mathbb E W_p^p(\hat\mu,\hat\mu)\lesssim 3^{-p\sqrt{(\log n)/c_\gamma}}=
  e^{-p\sqrt{2\log\lambda\log n}}\,,
  $$
which concludes the proof of upper bound.

For the lower bound.
  Let $\epsilon_n=e^{-\sqrt{2\log\gamma\log n}}$ then by \Cref{lem:metric_entropy_exp} we have
  $$
  \log N_\epsilon \ge \frac{1}{2\log\gamma}\left[\log(1/\epsilon)\right]^2=\log n.
  $$
  So $N_\epsilon \ge n$.
  Let $x_1,...,x_n\in B_0$ be such that $\min_{i\neq j}\|x_i-x_j\|\ge \epsilon$.
  Let $\mu_n$ be the distribution putting $n^{-1}$ mass at each $x_i$. The rest of the proof are identical to the lower bound proof of \Cref{thm:poly}. \end{proof}

\begin{proof}[Proof of \Cref{pro:fpc}]
  We only prove for the polynomial decay case, the exponential case is similar. Without loss of generality we assume $c_0=M=1$, as both constants can be recovered after scaling. Then
  \begin{align*}
\|\rho_{\tau}(X)\|_q^q =  &  \mathbb E\left\{\left[\sum_{m\ge 1}\left(\frac{X_m}{\tau_m}\right)^2\right]^{q/2}\right\}
  = \left\|\sum_{m\ge 1}\left(\frac{X_m}{\tau_m}\right)^2\right\|_{q/2}^{q/2}\\
  \le &\left[\sum_{m\ge 1}\left\|\left(\frac{X_m}{\tau_m}\right)^2\right\|_{q/2}\right]^{q/2}
  =  \left[\sum_{m\ge 1} \left\|\frac{X_m}{\tau_m}\right\|_q^2\right]^{q/2}\\
  = & \left[\sum_{m\ge 1} \frac{\|X_m\|_q^2}{\tau_m^2}\right]^{q/2}
  \le  \left[\sum_{m\ge 1}(\sigma_m/\tau_m)^2\right]^{q/2}\,,
  \end{align*}
  where the inequality uses Minkowski inequality since we assume $q/2\ge 1$. The claimed results follow by replacing $\sigma_m$ by the assumed upper bound, and $\tau_m$ by the assumed form in \eqref{eq:poly_tau} or \eqref{eq:exp_tau}.
\end{proof}

\section{Proof of concentration inequality}\label{app:concentration}
\begin{proof}[Proof of \Cref{thm:Bernstein-McDiarmid}]
Let $\Delta_i=\mathbb E\left[f(X_1,...,X_n)\mid X_{1}^i\right]-\mathbb E\left[f(X_1,...,X_n)\mid X_{1}^{i-1}\right]$, where $X_i^j$ denotes $(X_i,X_{i+1},...,X_j)$.

For two random vectors $(Y,Z)$ and a function $g$, we use the notation $\mathbb E_{Y}g(Y,Z)$ to denote the conditional expectation of $g(Y,Z)$ given $Z$ (the notation means that we integrate over $Y$ in $g(Y,Z)$).

  For any $i$ and $c>0$
  \begin{align*}
    &\mathbb E(e^{c\Delta_i}\mid X_1^{i-1})\\
    =&\mathbb E_{X_i} \exp \left\{c\left[\mathbb E_{X_{i+1}^n} f(X) - \mathbb E_{X_{i+1}^n, X_i'} f(X_{(i)}') \right]\right\}\\
    =&\mathbb E_{X_i} \exp \left\{\mathbb E_{X_{i+1}^n, X_i'}c\left[ f(X) -  f(X_{(i)}') \right]\right\}\\
    \le & \mathbb E_{X_i} \mathbb E_{X_{i+1}^n,X_i'} \exp(cD_i)\\
    = & \mathbb E_{X_{i+1}^n}\mathbb E_{X_i,X_i'}\exp (cD_i)\,.
  \end{align*}

  Using the fact that $\mathbb E_{X_i,X_i'} D_i=0$, we get
  \begin{align*}
    \mathbb E_{X_i,X_i'}\exp (cD_i) = & \mathbb E_{X_i,X_i'}\left(1+cD_i+\sum_{k\ge 2}\frac{c^k D_i^k}{k!}\right)\\
    =&1+\mathbb E_{X_i,X_i'}\sum_{k\ge 2}\frac{c^k D_i^k}{k!}\\
    \le & 1 + \frac{1}{2}\sigma_i^2 c^2 \sum_{k\ge 0}(cM)^k\le \exp\left( \frac{1}{2}\sigma_i^2 c^2 \sum_{k\ge 0}(cM)^k\right)\\
    =&\exp\left( \frac{1}{2}\sigma_i^2 c^2 (1-cM)^{-1}\right)
    \,,
  \end{align*}
  provided that $cM<1$.

  For any $t,c>0$, $c< M^{-1}$, we have
  \begin{align*}
    &\mathbb P(f-\mathbb E f \ge t\sigma)
    \le  e^{-ct\sigma} \mathbb E e^{c(f-\mathbb E f)}
    =  e^{-ct\sigma} \mathbb E e^{c\sum_{i=1}^n \Delta_i}\\
    \le & e^{-ct\sigma} \mathbb E\left[e^{c\sum_{i=1}^{n-1}\Delta_i}\mathbb E(e^{c\Delta_n}\mid X_1^{n-1})\right]\\
    \le & e^{-ct\sigma} \exp\left( \frac{1}{2}\sigma_n^2 c^2 (1-cM)^{-1}\right)\mathbb E\left[e^{c\sum_{i=1}^{n-1}\Delta_i}\right]\\
    \le & e^{-ct\sigma} \prod_{i=1}^n\exp\left( \frac{1}{2}\sigma_i^2 c^2 (1-cM)^{-1}\right)
    =  e^{-ct\sigma}\exp\left(\frac{1}{2}\sigma^2 c^2 (1-cM)^{-1}\right)\,,
  \end{align*}
  where $\sigma^2=\sum_{i=1}^n\sigma_i^2$.

The claimed result follows by choosing $c=\frac{t}{\sigma+tM}$.
\end{proof}

%


\bibliographystyle{plain}
\bibliography{wasserstein}

\begin{thebibliography}{10}

\bibitem{Airoldi13}
Edo~M Airoldi, Thiago~B Costa, and Stanley~H Chan.
\newblock Stochastic blockmodel approximation of a graphon: Theory and
  consistent estimation.
\newblock In {\em Advances in Neural Information Processing Systems}, pages
  692--700, 2013.

\bibitem{Aldous81}
David~J Aldous.
\newblock Representations for partially exchangeable arrays of random
  variables.
\newblock {\em Journal of Multivariate Analysis}, 11(4):581--598, 1981.

\bibitem{WGAN}
Martin Arjovsky, Soumith Chintala, and L{\'e}on Bottou.
\newblock Wasserstein gan.
\newblock {\em arXiv preprint arXiv:1701.07875}, 2017.

\bibitem{BickelC09}
Peter~J Bickel and Aiyou Chen.
\newblock A nonparametric view of network models and newman--girvan and other
  modularities.
\newblock {\em Proceedings of the National Academy of Sciences},
  106(50):21068--21073, 2009.

\bibitem{BobkovL14}
Sergey Bobkov and Michel Ledoux.
\newblock One-dimensional empirical measures, order statistics and kantorovich
  transport distances.
\newblock {\em preprint}, 2014.

\bibitem{Boissard11}
Emmanuel Boissard.
\newblock Simple bounds for the convergence of empirical and occupation
  measures in 1-wasserstein distance.
\newblock {\em Electronic Journal of Probability}, 16:2296--2333, 2011.

\bibitem{Boissard14}
Emmanuel Boissard and Thibaut Le~Gouic.
\newblock On the mean speed of convergence of empirical and occupation measures
  in wasserstein distance.
\newblock In {\em Annales de l'Institut Henri Poincar{\'e}, Probabilit{\'e}s et
  Statistiques}, volume~50, pages 539--563. Institut Henri Poincar{\'e}, 2014.

\bibitem{BolleyGC07}
Fran{\c{c}}ois Bolley, Arnaud Guillin, and C{\'e}dric Villani.
\newblock Quantitative concentration inequalities for empirical measures on
  non-compact spaces.
\newblock {\em Probability Theory and Related Fields}, 137(3-4):541--593, 2007.

\bibitem{BoucheronLM13}
St{\'e}phane Boucheron, G{\'a}bor Lugosi, and Pascal Massart.
\newblock {\em Concentration inequalities: A nonasymptotic theory of
  independence}.
\newblock Oxford university press, 2013.

\bibitem{Pena99}
Victor~H. {de la {P}e\~{n}a}.
\newblock A general class of exponential inequalities for martingales and
  ratios.
\newblock {\em The Annals of Probability}, 27(1):537--564, 1999.

\bibitem{DedeckerF15}
J{\'e}r{\^o}me Dedecker and Xiequan Fan.
\newblock Deviation inequalities for separately lipschitz functionals of
  iterated random functions.
\newblock {\em Stochastic Processes and their Applications}, 125(1):60--90,
  2015.

\bibitem{DereichSS13}
Steffen Dereich, Michael Scheutzow, and Reik Schottstedt.
\newblock Constructive quantization: Approximation by empirical measures.
\newblock In {\em Annales de l'Institut Henri Poincar{\'e}, Probabilit{\'e}s et
  Statistiques}, volume~49, pages 1183--1203. Institut Henri Poincar{\'e},
  2013.

\bibitem{Dudley69}
RM~Dudley.
\newblock The speed of mean glivenko-cantelli convergence.
\newblock {\em The Annals of Mathematical Statistics}, 40(1):40--50, 1969.

\bibitem{Dumer06}
Ilya Dumer.
\newblock Covering an ellipsoid with equal balls.
\newblock {\em Journal of Combinatorial Theory, Series A}, 113(8):1667--1676,
  2006.

\bibitem{DumerPP04}
Ilya Dumer, Mark~S Pinsker, and Vyacheslav~V. Prelov.
\newblock On coverings of ellipsoids in euclidean spaces.
\newblock {\em IEEE transactions on information theory}, 50(10):2348--2356,
  2004.

\bibitem{FournierG15}
Nicolas Fournier and Arnaud Guillin.
\newblock On the rate of convergence in wasserstein distance of the empirical
  measure.
\newblock {\em Probability Theory and Related Fields}, 162(3-4):707--738, 2015.

\bibitem{GaoLZ15}
Chao Gao, Yu~Lu, and Harrison~H Zhou.
\newblock Rate-optimal graphon estimation.
\newblock {\em The Annals of Statistics}, 43(6):2624--2652, 2015.

\bibitem{GAN}
Ian Goodfellow, Jean Pouget-Abadie, Mehdi Mirza, Bing Xu, David Warde-Farley,
  Sherjil Ozair, Aaron Courville, and Yoshua Bengio.
\newblock Generative adversarial nets.
\newblock In {\em Advances in neural information processing systems}, pages
  2672--2680, 2014.

\bibitem{GozlanC10}
Nathael Gozlan and Christian L{\'e}onard.
\newblock Transport inequalities. a survey.
\newblock {\em arXiv preprint arXiv:1003.3852}, 2010.

\bibitem{HallMW06}
Peter Hall, Hans-Georg M{\"u}ller, and Jane-Ling Wang.
\newblock Properties of principal component methods for functional and
  longitudinal data analysis.
\newblock {\em The annals of statistics}, 34(3):1493--1517, 2006.

\bibitem{Hoover82}
David~N Hoover.
\newblock Row-column exchangeability and a generalized model for probability.
\newblock {\em Exchangeability in Probability and Statistics, North-Holland,
  Amsterdam}, pages 81--291, 1982.

\bibitem{Kallenberg89}
Olav Kallenberg.
\newblock On the representation theorem for exchangeable arrays.
\newblock {\em Journal of Multivariate Analysis}, 30(1):137--154, 1989.

\bibitem{Kantorovich42}
Leonid~V Kantorovich.
\newblock On the translocation of masses.
\newblock In {\em Dokl. Akad. Nauk. USSR (NS)}, volume~37, pages 199--201,
  1942.

\bibitem{KantorovichR58}
Leonid~Vasilevich Kantorovich and Gennady~S Rubinstein.
\newblock On a space of completely additive functions.
\newblock {\em Vestnik Leningrad. Univ}, 13(7):52--59, 1958.

\bibitem{Kontorovich14}
Aryeh Kontorovich.
\newblock Concentration in unbounded metric spaces and algorithmic stability.
\newblock In {\em International Conference on Machine Learning}, pages 28--36,
  2014.

\bibitem{Ledoux99}
Michel Ledoux.
\newblock Concentration of measure and logarithmic sobolev inequalities.
\newblock In {\em Seminaire de probabilites XXXIII}, pages 120--216. Springer,
  1999.

\bibitem{Lei18network}
Jing Lei.
\newblock Network representation using graph root distributions.
\newblock {\em arXiv preprint arXiv:1802.09684}, 2018.

\bibitem{Lovasz12}
L{\'a}szl{\'o} Lov{\'a}sz.
\newblock {\em Large networks and graph limits}, volume~60.
\newblock American Mathematical Soc., 2012.

\bibitem{LuschgyP04}
Harald Luschgy and Gilles Pag{\`e}s.
\newblock Sharp asymptotics of the kolmogorov entropy for gaussian measures.
\newblock {\em Journal of Functional Analysis}, 212(1):89--120, 2004.

\bibitem{Monge1781}
Gaspard Monge.
\newblock M\'{e}moire sur la th\'{e}orie des d\'{e}blais des remblais.
\newblock {\em History of the Royal Academy of Sciences of Paris}, 1781.

\bibitem{Ramsay06}
James~O Ramsay.
\newblock {\em Functional data analysis}.
\newblock Wiley Online Library, 2006.

\bibitem{SinghP18}
Shashank Singh and Barnab{\'a}s P{\'o}czos.
\newblock Minimax distribution estimation in wasserstein distance.
\newblock {\em arXiv preprint arXiv:1802.08855}, 2018.

\bibitem{SommerfeldM18}
Max Sommerfeld and Axel Munk.
\newblock Inference for empirical wasserstein distances on finite spaces.
\newblock {\em Journal of the Royal Statistical Society: Series B (Statistical
  Methodology)}, 80(1):219--238, 2018.

\bibitem{Talagrand94transportation}
M~Talagrand.
\newblock The transportation cost from the uniform measure to the empirical
  measure in dimension≥ 3.
\newblock {\em The Annals of Probability}, pages 919--959, 1994.

\bibitem{Tsybakov09}
Alexandre~B. Tsybakov.
\newblock {\em Introduction to Nonparametric Estimation}.
\newblock Springer, New York, 2009.

\bibitem{Geer95}
Sara {van de {G}eer}.
\newblock Exponential inequalities for martingales, with application to maximum
  likelihood estimation for counting processes.
\newblock {\em The Annals of Statistics}, pages 1779--1801, 1995.

\bibitem{vdvWellner}
Aad~W Van Der~Vaart and Jon~A Wellner.
\newblock {\em Weak convergence and empirical processes}.
\newblock Springer, 1996.

\bibitem{Vaserstein69}
Leonid~Nisonovich Vaserstein.
\newblock Markov processes over denumerable products of spaces, describing
  large systems of automata.
\newblock {\em Problemy Peredachi Informatsii}, 5(3):64--72, 1969.

\bibitem{Verger-Gaugry05}
Jean-Louis Verger-Gaugry.
\newblock Covering a ball with smaller equal balls in ℝn.
\newblock {\em Discrete {\&} Computational Geometry}, 33(1):143--155, Jan 2005.

\bibitem{Villani08}
C{\'e}dric Villani.
\newblock {\em Optimal transport: old and new}, volume 338.
\newblock Springer Science \& Business Media, 2008.

\bibitem{WeedB17}
Jonathan Weed and Francis Bach.
\newblock Sharp asymptotic and finite-sample rates of convergence of empirical
  measures in wasserstein distance.
\newblock {\em arXiv preprint arXiv:1707.00087}, 2017.

\bibitem{Williamson01}
Robert~C Williamson, Alexander~J Smola, and Bernhard Scholkopf.
\newblock Generalization performance of regularization networks and support
  vector machines via entropy numbers of compact operators.
\newblock {\em IEEE transactions on Information Theory}, 47(6):2516--2532,
  2001.

\bibitem{Wolfe13}
Patrick~J Wolfe and Sofia~C Olhede.
\newblock Nonparametric graphon estimation.
\newblock {\em arXiv preprint arXiv:1309.5936}, 2013.

\bibitem{Ying_Bern}
Yiming Ying.
\newblock Mc{D}iarmid's inequalities of {B}ernstein and {B}ennett forms.
\newblock Available at \url{https://www.albany.edu/~yy298919/McDiarmid.pdf}.

\end{thebibliography}
\end{document}